\documentclass[11pt,a4paper]{amsart}

\usepackage{amssymb,amsmath,graphics,verbatim}
\usepackage{latexsym}
\usepackage{eucal}
\usepackage{a4wide}

\usepackage{color}

\newtheorem{theorem}{Theorem}
\newtheorem{lemma}[theorem]{Lemma}

\newtheorem{cor}[theorem]{Corollary}
\theoremstyle{remark}

\newcommand{\mc}{\mathcal}
\newcommand{\rr}{\mathbb{R}}
\newcommand{\nn}{\mathbb{N}}
\newcommand{\cc}{\mathbb{C}}
\newcommand{\hh}{\mathbb{H}}
\newcommand{\zz}{\mathbb{Z}}

\newcommand{\la}{s}
\newcommand{\eps}{\epsilon}

\newcommand{\pl}{\partial}
\newcommand{\x}{\times}

\newcommand{\til}{\widetilde}
\newcommand{\bbar}{\overline}

\newcommand{\supp}{\textrm{supp}}
\newcommand{\cjd}{\rangle}
\newcommand{\cjg}{\langle}

\newcommand{\demi}{\frac{1}{2}}
\newcommand{\ndemi}{\frac{n}{2}}

\newcommand{\ima}{\mathrm{Im}}
\newcommand{\rea}{\mathrm{Re}}
\newcommand{\Op}{{\rm Op}}

\begin{document}
\title[Equidistribution and Eisenstein series]{Equidistribution of Eisenstein series on convex co-compact hyperbolic manifolds}
\date{\today}
\author{Colin Guillarmou}
\address{DMA, U.M.R. 8553 CNRS\\
Ecole Normale Sup\'erieure\\
45 rue d'Ulm\\ 
F 75230 Paris cedex 05 \\France}
\email{cguillar@dma.ens.fr}
\author{Fr\'ed\'eric Naud}
\address{Laboratoire d'Analyse non lin\'eaire et G\'eom\'etrie\\
Universit\'e d'Avignon \\
33 rue Louis Pasteur 84000 Avignon}
\email{frederic.naud@univ-avignon.fr}
%\thanks{}
%\subjclass[2000]{Primary 58J32, Secondary 35P25}
%\keywords{}

\begin{abstract}
For convex co-compact 
hyperbolic manifolds $\Gamma\backslash \hh^{n+1}$ for which 
the dimension of the limit set satisfies $\delta_\Gamma< n/2$, we show that the high-frequency Eisenstein series 
associated to a point $\xi$ ``at infinity'' concentrate microlocally on a measure supported 
by (the closure of) the set of points in the unit cotangent bundle corresponding to geodesics ending at $\xi$.
The average in $\xi$ of these limit measures equidistributes towards the Liouville measure. 
\end{abstract}

\maketitle

\section{Introduction}
Since the early work of Schnirelman \cite{Schni}, Colin de Verdi\`ere \cite{CdV} and Zelditch \cite{Zelditch1}, it is a well known fact that on any compact Riemannian
manifold $X$ whose geodesic flow is ergodic, one can find a full density sequence $\lambda_j\rightarrow +\infty$ of eigenvalues
of the Laplacian $\Delta_X$ such that the corresponding normalized eigenfunctions $\psi_j$ are equidistributed i.e. for all
$f\in L^2(X)$, we have 
$$\lim_{j\rightarrow +\infty}\int_X f(z)\vert \psi_j(z) \vert^2dv(z)=\int_X f(z)dv(z),$$
where $dv$ is the normalized volume measure.
Much less is known for non-compact manifolds and the closest analogs of the above celebrated theorem are due to 
Zelditch \cite{Zelditch2}, Luo-Sarnak \cite{LuoSarnak} and Jakobson \cite{Jakobson} in the case of non-compact finite area hyperbolic surfaces. Let us recall their results. By $\hh^2$ we denote the usual hyperbolic plane. Let $X=\Gamma\backslash \hh^2$ be a finite area surface where $\Gamma$ is a non co-compact
co-finite Fuchsian group. The non compact ends of $X$ are cusps which can be viewed on the universal cover
as fixed points $c_j$ in $\partial \hh^2$ of parabolic elements in $\Gamma$. The number of non-equivalent cusps is finite.
The spectrum of the Laplacian $\Delta_X$ consists of two types. The discrete part which corresponds to $L^2(X)$-eigenfunctions
and may be not finite as in the arithmetic case, but is conjectured by Philips-Sarnak \cite{PS} to be finite in the generic case.
The absolutely continuous part $[1/4,+\infty)$ which is parametrized ($t\in \rr$) by the
finite set of Eisenstein series $E_X(1/2+it;z,j)$ related to each cusp $c_j$. The Eisenstein series $E_X(1/2+it;z,j)$ are smooth
({\it non }$L^2(X)$) eigenfunctions
$$\Delta_X E_X(1/2+it;z,j) =(1/4+t^2)E_X(1/2+it;z,j).$$
In the Poincar\'e upper half-plane model, Eisenstein series are usually defined through the series 
(absolutely convergent for $\rea(s)>1$)
$$ E_X(s;z,j)=\sum_{\gamma \in \Gamma_j \backslash \Gamma} \left( \ima (\sigma_j^{-1}\gamma  z ) \right)^s,$$
which enjoys a meromorphic continuation to the whole complex plane and is analytic on the critical line $\{\rea(s)=1/2 \}$.
In the above formula, $\Gamma_j$ is the stabilizer of the cusp $c_j$ and $\sigma_j \in \mathrm{PSL}_2(\rr)$ is such that
$\sigma_j(\infty)=c_j$. In the case of the modular group $\mathrm{PSL}_2(\zz)$, there is only one Eisenstein series given by
the formula (valid for $\rea(s)>1$)
$$E_X(s;x+iy)=\frac{1}{2}\sum_{\gcd (c,d)=1}\frac{y^s}{\vert cz+d\vert^{2s}}.$$
For all $t\in \rr$, define the density $\mu_t$ by
$$\int_X a(z)d\mu_t(z):=\sum_j \int_X a(z)\vert E_X(1/2+it;z,j)\vert^2dv(z),$$
where $a \in C_0^\infty(X)$. If we assume that we have only finitely many eigenvalues (believed to be the generic case)
then Zelditch's equidistribution result \footnote{More details about the appropriate normalization can be found in his paper 
\cite{Zelditch2} where a microlocal statement is actually proved.} is as follows: for $a\in C_0^\infty(X)$, 
\[\frac{1}{s(T)}\int_{-T}^T \Big| \int_X ad\mu_t -\pl_ts(t) \int_X a\, dv\Big|dt\to  0\,\,\textrm{ as }T\to \infty\]
where $s(t)$ is the scattering phase, this coefficient appearing 
as a sort of regularization of Eisenstein series due to 
the fact that the Weyl law involves the continuous spectrum.

On the other hand, for the modular surface $X=\mathrm{PSL}_2(\zz)\backslash \hh^2$, Luo and Sarnak \cite{LuoSarnak} showed that 
as $t\rightarrow +\infty$,
$$\int_X a d\mu_t=\frac{48}{\pi}\log (t)\int_X a dv+o(\log(t)),$$
which is a much stronger statement obtained via sharp estimates on certain $L$-functions. A microlocal version was later
proved by Jakobson \cite{Jakobson}. 

In the present paper, we focus on the case of {\it infinite volume hyperbolic manifolds} without cusps, more precisely convex co-compacts quotients $X=\Gamma\backslash \hh^{n+1}$ of the hyperbolic space. Let us recall some basic notations.
A discrete group of orientation preserving isometries of $\hh^{n+1}$ is said to be convex co-compact
if it admits a polygonal, finitely sided fundamental domain whose closure does not intersect the limit set of $\Gamma$.
The limit set $\Lambda_{\Gamma}$ and the 
set of discontinuity $\Omega_\Gamma$ are usually defined by 
\[\Lambda_{\Gamma}:=\bbar{\Gamma.o}\cap S^n, \quad \Omega_\Gamma:=S^n\setminus \Lambda_\Gamma,\]
where $o\in\hh^{n+1}$ is a point in $\hh^{n+1}$.
By a result of Patterson and Sullivan \cite{Pat,Su}, the Hausdorff dimension of $\Lambda_\Gamma$  
\[\delta_\Gamma:=\dim_{\rm Haus} (\Lambda_\Gamma )\]
is also the exponent of convergence of the Poincar\'e series, i.e.  for all $m,m'\in \hh^{n+1}$ and $\la>0$,
\begin{equation}\label{delta}
\sum_{\gamma\in \Gamma} e^{-\la d(\gamma m,m')}<\infty \iff \la>\delta_\Gamma,
\end{equation}
 where $d(m,m')$ denotes the hyperbolic distance.

In that case the spectrum of $\Delta_X$ has been completely described by Lax-Philips and consists of the absolutely continuous spectrum $[n^2/4,+\infty)$ and a (possibly empty) finite set of eigenvalues in $(0,n^2/4)$. Just like in the finite volume case,
there is a way (see next $\S$ for full details) to define Eisenstein functions which parametrize the continuous spectrum. 
These functions replace in this setting the plane wave solutions $e^{\pm i\la z.\omega}$ of $(\Delta-\la^2)u=0$ in $\rr_z^n$ (here $\omega\in S^{n-1}$). 
Let $g$ denote the usual hyperbolic metric. Convex co-compact quotients $X=\Gamma\backslash \hh^{n+1}$ fall in the class of conformally compact manifolds, which means that $X$ compactifies smoothly to a manifold $\overline{X}$ with boundary $\partial \overline{X}$ such that there exists a smooth boundary defining 
function $x$ with the property that $x^2g$ extends to a smooth metric on $\bbar{X}$ and $|dx|_{x^2g}=1$ on $\pl X$. The boundary 
$\partial X$ is called the conformal boundary and can be identified with $\Gamma \backslash \Omega_\Gamma$. The 
Eisenstein series \footnote{They depend on the boundary defining function $x$; in the boundary variable $\xi$ they should be thought of as section of a complex power of the conormal bundle to the boundary} 
are smooth functions $E_X(s;m,\xi)$ where $s\in \cc$ is the spectral parameter as above
and $(m,\xi)\in X\times \partial \bbar{X}$. They are generalized non-$L^2(X)$ eigenfunctions of the Laplacian.
Using the ball model and an appropriate boundary defining function, Eisenstein series lift to $\hh^{n+1}$ to
$$E_X(s;m,\xi)=\sum_{\gamma \in \Gamma} \Big(\frac{1-|\gamma m|^2}{4|\gamma m-\xi|^2}\Big)^\la,$$
which is absolutely convergent for $\rea(s)>\delta_\Gamma$. Our first result is the following.
\begin{theorem}\label{eqinspace1}
Let $X=\Gamma\backslash\hh^{n+1}$ be a convex co-compact quotient with $\delta_{\Gamma}<n/2$.
Let $a\in C_0^\infty(X)$ and let $E_X(\la;\cdot,\xi)$ be an Eisenstein series as above with a given point $\xi\in \pl \bbar{X}$ at infinity. Then
 we have as $t\to + \infty$, 
\[\int_{X}a(m)\Big|E_X(\ndemi+it;m,\xi)\Big|^2dv(m) =  \int_X a(m)
E_X(n;m,\xi)dv(m)+\mc{O}(t^{2\delta_\Gamma-n}).\] 
\end{theorem}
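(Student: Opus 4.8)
The plan is to expand the square $|E_X|^2$ into a double sum over $\Gamma\times\Gamma$, extract the diagonal as the main term, and control the off-diagonal part as an oscillatory sum in the large parameter $t$. Write $b(m):=\frac{1-|m|^2}{4|m-\xi|^2}>0$, so that $E_X(s;m,\xi)=\sum_{\gamma\in\Gamma}b(\gamma m)^s$. Fix a fundamental domain $\mc{F}$ and let $\til a$ be the lift of $a$ (compactly supported modulo $\Gamma$); since $E_X$ is $\Gamma$-invariant in $m$,
\[\int_X a\,\big|E_X(\ndemi+it;\cdot,\xi)\big|^2\,dv=\sum_{\gamma,\gamma'\in\Gamma}\int_{\mc{F}}\til a(m)\,b(\gamma m)^{\ndemi+it}\,b(\gamma'm)^{\ndemi-it}\,dv(m).\]
The rearrangement is legitimate because, for $m$ in the compact set $K:=\supp\til a\cap\mc{F}$, one has $b(\gamma m)\asymp e^{-d(o,\gamma m)}\asymp e^{-d(o,\gamma o)}$ uniformly — here $\xi\in\Omega_\Gamma$ is used to bound $|\gamma m-\xi|$ away from $0$ — so that $\sum_\gamma b(\gamma m)^{\ndemi}<\infty$ exactly by the hypothesis $\delta_\Gamma<n/2$ and \eqref{delta}. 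The crucial algebraic point is that on the critical line the modulus is $t$-independent: the diagonal terms $\gamma=\gamma'$ sum to $\sum_\gamma b(\gamma m)^{n}=E_X(n;m,\xi)$, giving after integration precisely the asserted main term $\int_X a\,E_X(n;\cdot,\xi)\,dv$. Everything reduces to showing that the off-diagonal remainder
\[R(t):=\sum_{\gamma\neq\gamma'}\int_K \til a(m)\,\big(b(\gamma m)b(\gamma'm)\big)^{\ndemi}\,e^{it\,\Phi_{\gamma,\gamma'}(m)}\,dv(m),\qquad \Phi_{\gamma,\gamma'}:=\log b(\gamma\cdot)-\log b(\gamma'\cdot),\]
is $\mc{O}(t^{2\delta_\Gamma-n})$.

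Next I would analyse the phase. Up to an additive constant, $-\log b(\gamma m)$ is the Busemann function based at $\xi$ evaluated at $\gamma m$, equivalently the Busemann function $B_{\gamma^{-1}\xi}$ based at $\gamma^{-1}\xi$ evaluated at $m$; in particular $|\na_m\log b(\gamma m)|\equiv 1$ and the higher $m$-derivatives are bounded on $K$. Hence $\na_m\Phi_{\gamma,\gamma'}(m)=\na_mB_{\gamma'^{-1}\xi}(m)-\na_mB_{\gamma^{-1}\xi}(m)$ is the difference of the two unit vectors pointing from $m$ towards $\gamma^{-1}\xi$ and towards $\gamma'^{-1}\xi$. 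Because $\xi\in\Omega_\Gamma$ has trivial stabiliser in $\Gamma$, one has $\gamma^{-1}\xi\neq\gamma'^{-1}\xi$ whenever $\gamma\neq\gamma'$, so this gradient never vanishes on $K$; moreover its size is comparable, uniformly for $m\in K$, to the visual gap $|\gamma^{-1}\xi-\gamma'^{-1}\xi|$. Each off-diagonal integral is thus purely non-stationary, and $N$-fold integration by parts — the amplitude and its derivatives being $\lesssim (b(\gamma m)b(\gamma'm))^{\ndemi}\asymp e^{-\frac{n}{2}(d(o,\gamma o)+d(o,\gamma'o))}$ — yields, for every $N$,
\[\Big|\int_K(\cdots)\Big|\;\lesssim_N\;e^{-\frac{n}{2}(d(o,\gamma o)+d(o,\gamma'o))}\,\min\Big\{1,\big(t\,|\gamma^{-1}\xi-\gamma'^{-1}\xi|\big)^{-N}\Big\}.\]

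The final and hardest step is to sum this over $\gamma\neq\gamma'$ with the correct power of $t$. Here I would combine the amplitude weights $e^{-\frac{n}{2}d(o,\gamma o)}$ with the orbit counting $\#\{\gamma:d(o,\gamma o)\le T\}\asymp e^{\delta_\Gamma T}$ and Sullivan's shadow lemma, which places $\gamma^{-1}\xi$ within distance $\asymp e^{-d(o,\gamma o)}$ of $\Lambda_\Gamma$ and shows that the orbit points cluster like a $\delta_\Gamma$-dimensional set at the corresponding scale. Performing a dyadic decomposition in $d(o,\gamma o)$ (equivalently in the visual gap) and splitting the sum at the threshold $d(o,\gamma o)\sim\log t$, the non-stationary estimate disposes of all pairs whose gap exceeds $1/t$, while the remaining clustered pairs are controlled by the tail of the Poincar\'e series; since
\[\sum_{d(o,\gamma o)>\log t}e^{-\frac{n}{2}d(o,\gamma o)}\;\asymp\;t^{\delta_\Gamma-n/2},\]
squaring this tail produces exactly the exponent $2\delta_\Gamma-n$.

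I expect the main obstacle to be precisely this last bookkeeping. Pairs with a small visual gap $|\gamma^{-1}\xi-\gamma'^{-1}\xi|$ degrade the integration-by-parts gain, and one must show — through the shadow lemma together with the uniform convergence afforded by $\delta_\Gamma<n/2$ — that such resonant pairs are rare enough that the oscillatory gain and the amplitude decay jointly deliver the clean bound $\mc{O}(t^{2\delta_\Gamma-n})$. Controlling the off-diagonal sum uniformly as both group elements vary, rather than term by term, is the technical heart of the argument.
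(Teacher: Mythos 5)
Your skeleton coincides with the paper's: extract the diagonal $\gamma=\gamma'$ to get $E_X(n;m,\xi)$, note that the hypothesis $\delta_\Gamma<n/2$ makes all rearrangements absolutely convergent, identify the off-diagonal phase with a difference of Busemann functions whose gradient on $\supp a$ is comparable to the visual gap $|\gamma^{-1}\xi-\gamma'^{-1}\xi|$, integrate by parts, split the group at $d(o,\gamma o)=\log t$, and close with Patterson's counting bound \eqref{pater}. All of this matches the paper's Lemma \ref{Idecays}. But there is a genuine gap at exactly the point you flag as the ``technical heart'': you never prove the lower bound on the visual gap, and the tool you reach for --- Sullivan's shadow lemma --- is the wrong one. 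The shadow lemma is a statement about the Patterson--Sullivan \emph{measure} of shadows, i.e.\ about the statistics of how orbit points cluster near $\Lambda_\Gamma$; it gives no pairwise separation between two distinct points $\gamma^{-1}\xi$ and $\gamma'^{-1}\xi$, and in particular does not rule out resonant pairs with gap far smaller than $e^{-\min(d(o,\gamma o),d(o,\gamma' o))}$, which is precisely what your dyadic bookkeeping needs. Your claim that ``pairs whose gap is below $1/t$ are controlled by the tail of the Poincar\'e series'' presupposes the implication (small gap $\Rightarrow$ \emph{both} elements have $d(o,\gamma o)\gtrsim \log t$), and that implication is exactly the missing lemma, not a consequence of clustering statistics.

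The paper fills this hole with a deterministic, uniform estimate: by Beardon's isometric-sphere formula \eqref{formulebeardon}, taking $\xi'=\gamma^{-1}\circ\gamma'\xi$ and using that $\xi$ stays in a compact subset of $\Omega_\Gamma$ (so the distances to the centers $a_\gamma$ of the isometric spheres are uniformly controlled, and $|\xi-\beta\xi|$ is uniformly bounded below for $\beta\neq{\rm Id}$ by proper discontinuity), one gets
\[
|\gamma^{-1}\xi-\gamma'^{-1}\xi|\;\geq\; C\,\max\bigl(e^{-d(o,\gamma o)},\,e^{-d(o,\gamma' o)}\bigr)
\]
uniformly in $\gamma\neq\gamma'$; this is \eqref{boundbelow1}. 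Combined with the upper bounds $|\pl^\alpha_m\nabla_m B_\xi(\gamma m,\gamma'm)|\lesssim |\gamma^{-1}\xi-\gamma'^{-1}\xi|$ it yields, after $j$ integrations by parts, the bound \eqref{jtimes}:
\[
|I_{\gamma,\gamma'}(t)|\;\leq\; C\,t^{-j}\,e^{-\ndemi(d(o,\gamma o)+d(o,\gamma' o))}\min\bigl(e^{jd(o,\gamma o)},e^{jd(o,\gamma' o)}\bigr),
\]
which is your estimate with the gap \emph{replaced by its deterministic lower bound}. From here your final accounting is correct and is the paper's: small gap forces both $d(o,\gamma o)$ and $d(o,\gamma' o)$ to exceed $\log t-C$, the tail of the Poincar\'e series contributes $t^{\delta_\Gamma-n/2}$ per factor (squared: $t^{2\delta_\Gamma-n}$), and for the pairs with both displacements $\leq\log t$ one takes $j>n/2-\delta_\Gamma$, bounds $\min(e^{jd},e^{jd'})\leq e^{\frac{j}{2}(d+d')}$, and sums using \eqref{pater}. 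So: right architecture, right final exponent, but the decisive separation estimate is absent, and no amount of shadow-lemma clustering will substitute for it --- what saves the argument is convexity/discreteness of $\Gamma$ and $\xi\in\Omega_\Gamma$, encoded in Beardon's formula, not the fractal statistics of the limit set.
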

This result shows individual equidistribution of high energy Eisenstein series. The limit measure
on $X$ is given by the harmonic density $E_X(n;m,\xi)$ whose boundary limit is the Dirac mass at $\xi \in \partial X$.

A microlocal extension of this theorem is actually proved in $\S 3$. We first need to introduce some adequate notations.
Fix any $\xi \in \partial X$. Let  ${\mathcal L}^\Gamma_\xi$ defined by 
$${\mathcal L}^\Gamma_\xi:=\overline{\cup_{\gamma\in \Gamma}\mc{L}_{\gamma \xi}}\subset S^*X,$$
where $\mc{L}_{\gamma\xi}$ are stable Lagrangian submanifolds of the unit cotangent bundle $S^*X$: 
the Lagrangian manifold $\mc{L}_{\gamma \xi}$ is defined to be the projection on $\Gamma\backslash S^*\hh^{n+1}$ of  
$$\{(m,\nu_{\gamma\xi}(m))\in S^*\hh^{n+1}; m\in \hh^{n+1}\},$$ 
where $\nu_{\gamma\xi}(m)$ is the unit (co)vector tangent to the geodesic starting at $m$ and pointing toward $\gamma\xi\in S^n$.  The set ${\mathcal L}^\Gamma_\xi$ ``fibers'' over $X$, and the fiber over a point $m\in X$ corresponds to the closure of the set of directions $v\in S^*X$ such that the geodesic starting at $m$ with directions $v$
converges to $\xi \in \pl\bbar{X}$ as $t\to +\infty$.
Since the closure of the orbit $\Gamma . \xi$ satisfies $\overline{\Gamma . \xi}\supset \Lambda_\Gamma$, 
we actually have
$$ {\mathcal L}^\Gamma_\xi \supset {\mathcal T}_+:=
\{ (m,\nu)\in S^*X\ :\ g_t(m,\nu)\ \textrm{remains bounded as}\ t\rightarrow +\infty\},$$
where $g_t:S^*X\rightarrow S^*X$ is the geodesic flow. The set ${\mathcal T}_+$ is often refered as the 
{\it forward trapped set}. The Hausdorff dimension of 
${\mathcal L}^\Gamma_\xi $ is $n+\delta_\Gamma+1$ and satisfies $n+1< \delta_\Gamma
+n+1<2n+1$ if $\Gamma$ is non elementary.

%\begin{figure}[ht!]
%\begin{center}
%\input{eisen.pstex_t}
%\caption{The unit tangent vectors to geodesics going from $m$ to $\xi$ have closure supporting the semi-classical measure obtained from the 
%Eisenstein series $E_X(n/2+i/h;m,\xi)$ as $h\to 0$}
%\end{center}
%\end{figure}

Our phase-space statement is the following, we refer the reader to $\S 3.2$ for the required background on pseudo-differential
operators.
\begin{theorem}
Let $A$ be a compactly supported 
$0$-th order pseudodifferential operator with principal symbol
\footnote{$a\in C_0^\infty(X,T^*X)$ means that $a\in C^\infty(T^*X)$ is compactly supported on the base 
but not necessarily in the fibers.} $a\in C_0^\infty(X,T^*X)$, then as $t\to +\infty$
\[\left\cjg AE_X(\ndemi+it;\cdot,\xi),E_X(\ndemi+it;\cdot,\xi)\right\cjd_{L^2(X)}=\int_{S^*X} a\,d\mu_{\xi}+\mc{O}(t^{-\min(1,n-2\delta_\Gamma)})\]
where $\mu_{\xi}$ is a $g_t$-invariant measure supported on the fractal subset ${\mathcal L}^\Gamma_\xi \subset S^*X$.
\end{theorem}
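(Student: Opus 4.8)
The plan is to use the explicit series for $E_X$ together with the fact that each summand is a Lagrangian (WKB) state. Writing $P_\gamma(m):=\frac{1-|\gamma m|^2}{4|\gamma m-\xi|^2}$, we have on $\hh^{n+1}$
\[
E_X(\ndemi+it;m,\xi)=\sum_{\gamma\in\Gamma}e_\gamma(m),\qquad e_\gamma=b_\gamma\,e^{it\phi_\gamma},\quad b_\gamma=P_\gamma^{n/2},\ \phi_\gamma=\log P_\gamma.
\]
The geometric input is that $\phi_\gamma$ is, up to an additive constant in $m$, the Busemann function based at $\gamma^{-1}\xi$, so $|d\phi_\gamma|_g=1$ and $d\phi_\gamma(m)=\nu_{\gamma^{-1}\xi}(m)\in S^*X$; thus $e_\gamma$ oscillates at frequency $\sim t$ along the Lagrangian $\mc{L}_{\gamma^{-1}\xi}$, and $|e_\gamma|^2=P_\gamma^{\,n}$. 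Since $A$ is compactly supported, the pairing is well defined although $E_X\notin L^2(X)$.

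First I would expand $\langle AE_X,E_X\rangle=\sum_{\gamma,\gamma'}\langle Ae_\gamma,e_{\gamma'}\rangle$ and isolate the diagonal. Because $A$ is a classical operator of order $0$ and $e_\gamma$ has unit-length phase gradient, the standard expansion of a pseudodifferential operator on a Lagrangian state gives
\[
\langle Ae_\gamma,e_\gamma\rangle=\int_X a\big(m,\nu_{\gamma^{-1}\xi}(m)\big)\,P_\gamma(m)^{\,n}\,dv(m)+\mc{O}(t^{-1}),
\]
the symbol being evaluated on $S^*X$ since $a$ is homogeneous of degree $0$. Summing over $\gamma$ (the series at exponent $n>\delta_\Gamma$ converges absolutely) I would define
\[
\int_{S^*X}a\,d\mu_\xi:=\sum_{\gamma\in\Gamma}\int_X a\big(m,\nu_{\gamma^{-1}\xi}(m)\big)\,P_\gamma(m)^{\,n}\,dv(m),
\]
so that $\mu_\xi$ is the sum of the pushforwards of $P_\gamma^{\,n}\,dv$ under $m\mapsto(m,\nu_{\gamma^{-1}\xi}(m))$; it is supported on $\overline{\bigcup_{\gamma}\mc{L}_{\gamma^{-1}\xi}}=\overline{\bigcup_{\gamma}\mc{L}_{\gamma\xi}}=\mc{L}^\Gamma_\xi$. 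To check $g_t$-invariance I would pass to horospherical coordinates based at the relevant boundary point: there $g_t$ acts by translation of the Busemann parameter and $P_\gamma^{\,n}\,dv$ becomes flat Lebesgue measure, which is translation invariant, so each summand and hence $\mu_\xi$ is invariant.

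Next I would estimate the off-diagonal sum $\sum_{\gamma\neq\gamma'}\langle Ae_\gamma,e_{\gamma'}\rangle$. Each term is an oscillatory integral with phase $t(\phi_\gamma-\phi_{\gamma'})$ and amplitude supported in $\supp a$; on that compact set the directions $\nu_{\gamma^{-1}\xi}$ and $\nu_{(\gamma')^{-1}\xi}$ differ, so the phase is non-stationary and integration by parts produces decay in $t$, with a gain governed by a lower bound on $|d\phi_\gamma-d\phi_{\gamma'}|$ over $\supp a$. The remaining task is to sum these contributions over all pairs while keeping the amplitudes $P_\gamma^{\,n/2}P_{\gamma'}^{\,n/2}$ under control; here the hypothesis $\delta_\Gamma<n/2$ is exactly what makes the Poincar\'e series at exponent $n/2$ converge and lets one balance the oscillatory decay against the group sum, yielding the bound $\mc{O}(t^{2\delta_\Gamma-n})$. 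This is the estimate that already controls the remainder in Theorem \ref{eqinspace1}, now carried out with a fibre-dependent amplitude.

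I expect this off-diagonal estimate to be the main obstacle: unlike for a finite superposition of WKB states, one must integrate by parts uniformly over infinitely many pairs, and the pairs for which $\gamma^{-1}\xi$ and $(\gamma')^{-1}\xi$ are close on $S^n$ give only weak decay and must be summed with care using the critical-exponent bound. Combining the diagonal error $\mc{O}(t^{-1})$ with the off-diagonal error $\mc{O}(t^{2\delta_\Gamma-n})$ then produces the stated remainder $\mc{O}(t^{-\min(1,\,n-2\delta_\Gamma)})$.
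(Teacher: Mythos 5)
Your proposal is correct and is essentially the paper's own proof: the same expansion of $E_X$ into the $\Gamma$-series of WKB states with unit-speed Busemann phases, the same diagonal sum defining $\mu_\xi$ as the superposition of weighted pushforwards onto the stable Lagrangians $\mc{L}_{\gamma\xi}$ (the paper's formula \eqref{converging}), and the same reuse of the off-diagonal nonstationary-phase estimate of Lemma \ref{Idecays} with a now $\gamma$-dependent amplitude (harmless since its derivatives are uniform in $\gamma,\xi$), giving the $\mc{O}(t^{2\delta_\Gamma-n})$ term. The only divergence is technical: where you invoke homogeneity of $a$ to let the classical operator $A$ act directly on each Lagrangian state, the paper first microlocalizes $E_X$ near the unit cosphere bundle using the eigenvalue equation and a semiclassical cutoff $\psi_h$, then applies the semiclassical calculus (Lemmas \ref{compos} and \ref{nonzwo}) so as to obtain the diagonal terms with remainders $\mc{O}(t^{-1})$ that are uniform over the infinitely many group elements --- the uniformity you implicitly assume when summing your diagonal errors.
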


Notice that the fractal behaviour of the semi-classical limit $\mu_\xi$ can only be observed at the microlocal level. This is 
to our knowledge the first example in the mathematical literature of a high energy limit of eigenfunctions which is concentrated
microlocally on a genuine fractal trapped set.
\begin{figure}[ht!]
\begin{center}
\input{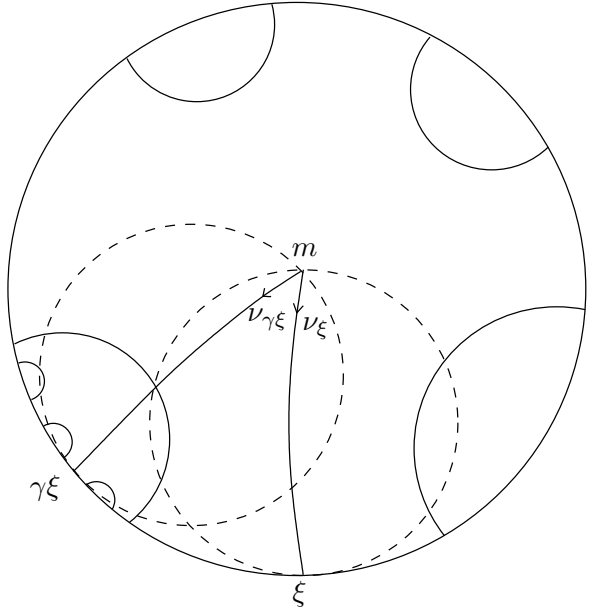}
\caption{Geodesics pointing toward $\xi$ and $\gamma\xi$ in the domain of discontinuity}
\end{center}
\end{figure}

In the theoretical physics literature, we mention some work of Ishio-Keating \cite{IsKe} on somehow related questions for billards.

By averaging over  the boundary with respect to the volume measure on $\partial\bbar{X}$ induced by the  defining function $x$, 
we obtain as $t\to +\infty$
\begin{equation}\label{averaging}
\int_{\pl\bbar{X}}\int_{X}a(m)\Big|E_X(\ndemi+it;m,\xi)\Big|^2dv(m)dv_{\pl\bbar{X}}(\xi) = 
{\rm vol}(S^n) \int_X a(m)dv(m)+\mc{O}(t^{2\delta_\Gamma-n})\end{equation}
and 
\[ \int_{\pl \bbar{X}}\left\cjg AE_X(\ndemi+it;\cdot,\xi),E_X(\ndemi+it;\cdot,\xi)\right\cjd_{L^2(X)} dv_{\pl\bbar{X}}(\xi)=\int_{S^*X} a\, d\mu+ \mc{O}(t^{-\min(n-2\delta_\Gamma,1)})
\]
where $\mu$ denotes the Liouville measure. This is the perfect analog of the previously known results for 
the modular surface (actually with a remainder in our case).

It is actually possible to prove a much stronger asymptotic expansion for the averaged case. We show the following.
\begin{theorem}
Let $X=\Gamma\backslash \hh^{n+1}$ be a convex co-compact hyperbolic manifold and assume that the limit set of 
$\Gamma$ has Hausdorff dimension $\delta_\Gamma<n/2$.
Let  $a\in C_0^\infty(X)$ and $\mc{C}$ be the set of all closed geodesics. 
Then for all $N\in \nn$, one has the expansion as $t\to +\infty$
\begin{equation}\label{qer}
\begin{gathered}
\int_{\pl X}\int_X a(m)|E_X(\ndemi+it;m,\xi)|^2dv(m)\,dv_{\pl \bbar{X}}(\xi)={\rm vol}(S^n)
\int_X a(m)dv(m)\\
+ L(t)\sum_{\gamma\in \mc{C}}
e^{-(\ndemi+it)\ell(\gamma)}\sum_{k=0}^{N}\frac{H_{k}(\ell(\gamma))}
{|\det({\rm Id}-e^{-\ell(\gamma)}R_\gamma^{-1})|}t^{-k}
\int_{\gamma} P_ka\,d\mu +\\
+ L(-t)\sum_{\gamma\in \mc{C}}
e^{-(\ndemi-it)\ell(\gamma)}\sum_{k=0}^{N}\frac{H_{k}(\ell(\gamma))}{|\det({\rm Id}-e^{-\ell(\gamma)}R_\gamma^{-1})|}(-t)^{-k}
\int_{\gamma} P_ka\, d\mu + \mc{O}(t^{-N-n-1}).
\end{gathered}
\end{equation}
where $H_k(\ell(\gamma))$ is an explicit bounded function of $\ell(\gamma)$, $P_k$ are differential operators of order 
$2k$ with coefficient uniformly bounded in terms of $\gamma\in\mc{C}$, $d\mu$ is the Riemannian measure induced on $\gamma$, $R_\gamma\in {\rm SO}(n)$ is the holonomy along $\gamma$ 
and 
\[\begin{gathered}
L(t)=t^{-(n-1)/2}\frac{2^{\ndemi+3-2it}|\Gamma(it)|^2}{\Gamma(it+\demi)\Gamma(\ndemi-it)}=\mc{O}(t^{-n})\\
 P_0= 1, \quad  H_{0}(\ell(\gamma))=2^\ndemi\pi^{\frac{n-1}{2}}e^{i\frac{n-1}{4}\pi}.
\end{gathered}\] 
\end{theorem}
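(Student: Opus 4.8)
The plan is to reduce the boundary-averaged quantity to a regularized trace of the resolvent and then apply a Selberg-type decomposition over the group $\Gamma$. Write $R(\la)=(\Delta_X-\la(n-\la))^{-1}$ for the meromorphically continued resolvent. The starting point is the stationary scattering identity expressing the spectral measure through the Poisson (Eisenstein) operator: there is an explicit Plancherel factor $\beta(t)$ such that, as Schwartz kernels,
\[
[R(\ndemi+it)-R(\ndemi-it)](m,m')=\beta(t)\int_{\pl\bbar{X}}E_X(\ndemi+it;m,\xi)\,E_X(\ndemi-it;m',\xi)\,dv_{\pl\bbar{X}}(\xi).
\]
Since the base of the series defining $E_X$ is real and positive we have $\overline{E_X(\ndemi+it;m,\xi)}=E_X(\ndemi-it;m,\xi)$ on the critical line, so setting $m=m'$, pairing against $a\in C_0^\infty(X)$, and using that $a$ is compactly supported (so the jump kernel is smooth on the diagonal and the trace is a genuine integral over $X$) yields
\[
\int_{\pl\bbar{X}}\int_X a(m)\,|E_X(\ndemi+it;m,\xi)|^2\,dv(m)\,dv_{\pl\bbar{X}}(\xi)=\beta(t)^{-1}\,\Tr\!\big(a[R(\ndemi+it)-R(\ndemi-it)]\big).
\]
It therefore suffices to produce the full large-$t$ expansion of this regularized resolvent trace.

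Next I would lift to the universal cover, where $R_X(\la;m,m')=\sum_{\gamma\in\Gamma}G_\la(d(m,\gamma m'))$ and $G_\la$ is the explicit $\hh^{n+1}$-resolvent kernel, a known function of the distance with $G_\la(r)\sim C(\la)e^{-\la r}$ as $r\to\infty$ and a hypergeometric expansion whose critical-line asymptotics supply the relevant $\Gamma$-factors. Thus $\Tr(aR_X(\la))=\int_X a(m)\sum_{\gamma\in\Gamma}G_\la(d(m,\gamma m))\,dv(m)$. Splitting off $\gamma=e$, the identity term produces in the difference $R(\ndemi+it)-R(\ndemi-it)$ the on-diagonal free spectral density; a direct computation on $\hh^{n+1}$ together with $\beta(t)^{-1}$ gives exactly the principal term $\vol(S^n)\int_X a\,dv$, matching \eqref{averaging}. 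The remaining terms are organized by nontrivial conjugacy classes, which are in bijection with closed geodesics $\gamma\in\mc{C}$ of length $\ell(\gamma)$ and holonomy $R_\gamma\in\mathrm{SO}(n)$.

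For each such class I would run a stationary-phase analysis. For $\la=\ndemi+it$ the kernel $G_\la(r)$ carries the oscillatory factor $e^{-itr}$ and the decaying amplitude $e^{-\ndemi r}$, so $\int_X a(m)G_\la(d(m,\gamma m))\,dv(m)$ is a stationary-phase integral with phase $d(m,\gamma m)$. This phase attains its minimum $\ell(\gamma)$ precisely along the closed geodesic (a clean critical submanifold of dimension one), and transverse to it the Hessian is the linearized Poincar\'e return map, whose determinant is $\det(\mathrm{Id}-e^{-\ell(\gamma)}R_\gamma^{-1})$. Applying stationary phase in the $n$ transverse directions, using the precise critical-line asymptotics of $G_\la$ to generate $L(\pm t)$ and the coefficients $H_k$, and integrating the transverse Taylor corrections of the amplitude along the geodesic, one obtains for each $\gamma$ a contribution
\[
L(t)\,e^{-(\ndemi+it)\ell(\gamma)}\sum_{k=0}^{N}\frac{H_{k}(\ell(\gamma))}{|\det(\mathrm{Id}-e^{-\ell(\gamma)}R_\gamma^{-1})|}\,t^{-k}\int_\gamma P_k a\,d\mu,
\]
with $P_k$ a differential operator of order $2k$ (the standard stationary-phase correction) and $P_0=1$. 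The two resolvents $R(\ndemi\pm it)$ produce the two conjugate sums carrying $L(t)$ and $L(-t)$.

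Finally, summing over $\gamma\in\mc{C}$ requires the series $\sum_\gamma e^{-\ndemi\ell(\gamma)}$ to converge; by the prime geodesic theorem the counting function of closed geodesics grows with exponential rate equal to the topological entropy $\delta_\Gamma$, so this series converges precisely when $\delta_\Gamma<\ndemi$, which is exactly our hypothesis, while the bounded factor $|\det(\mathrm{Id}-e^{-\ell}R_\gamma^{-1})|^{-1}\to1$ as $\ell\to\infty$ does not affect convergence. The main obstacle is to make the per-geodesic stationary-phase expansion \emph{uniform} in $\ell(\gamma)$ — the length ranges over all scales while the frequency is fixed at $t$ — and to control the remainder uniformly so that, after summing the (convergent) geodesic series, one reaches the stated error $\mc{O}(t^{-N-n-1})$. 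This uniformity, together with the exact bookkeeping of the $\Gamma$-factor asymptotics of $G_\la$ and of $\beta(t)^{-1}$ that assemble $L(\pm t)=\mc{O}(t^{-n})$, is the technical heart of the argument.
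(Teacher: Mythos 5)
Your proposal follows essentially the same route as the paper's proof: Stone's formula converts the boundary average into a regularized trace of the spectral measure, lifting to $\hh^{n+1}$ with the explicit resolvent kernel makes the identity element yield ${\rm vol}(S^n)\int_X a\,dv$, and the nontrivial terms are organized by conjugacy classes and treated by critical-line asymptotics of the kernel (the paper's Lemma \ref{expansion}) followed by stationary phase in the $n$ directions transverse to each closed geodesic, with the Hessian producing $|\det({\rm Id}-e^{-\ell(\gamma)}R_\gamma^{-1})|$ and convergence coming from $\delta_\Gamma<n/2$. The ``technical heart'' you correctly flag but do not carry out --- uniformity of the expansions and remainders over all conjugates within a class and over all lengths --- is exactly what the paper supplies via its splitting $[\gamma]=A_1(\gamma)\cup A_2(\gamma)$ (axes near or far from the fundamental domain) and the uniform non-stationary and stationary phase bounds of Lemmas \ref{A_2} and \ref{A_1}, each error carrying a factor $e^{-\frac{n}{2}d(m,\alpha m)}$ so that the full Poincar\'e series, not merely the sum over closed geodesics, converges.
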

Notice that the asymptotic expansion is valid at all orders and, except for the first term, involves oscillating sums over closed geodesics. This formula is similar to what physicists call ``the Gutzwiller formula" in the literature related to Quantum Chaos. In the mathematics literature it is of course quite similar to the Selberg trace formula (proved in \cite{PatPer,BO,GZw} in this setting, see also \cite{GuiNau}) 
and to trace formulae proved in \cite{CdV1,Cha, DuiGui,BrUr, Mei}.\\

Let us describe the organization of the paper. The section $\S 2$ is devoted to the necessary analytic and geometric background.
The proofs start in $\S 3$ and take advantage of the convergence of Poincar\'e series garanteed by the hypothesis
$\delta_\Gamma <n/2$. We first prove the ``configuration space" equidistribution result using a direct approach which
is based on an ``off-diagonal" cancelation phenomena due to some non-stationnary phase effect. To get a sharp remainder estimates in terms of $\delta$, we need to analyze differently 
the elements in the group with large and small translation lengths. 
This technique is robust enough to extend to the pseudo-differential (phase space) equidistribution case. We treat the problem using semi-classical analysis, namely
some standard results on the action of $h$-Pseudos on $h$-Lagrangian distributions. Theorem \ref{averaging} is based on
a completely different technique and make use of Stone's formula and the high frequency asymptotic for the resolvent
on the universal covering $\hh^{n+1}$. 
The result follows from a repeated application of stationnary phase expansions (with parameter) and requires some 
precise estimate in terms of the group elements.

We believe that extensions of these results to non-constant negative curvatures setting can 
be obtained for manifolds with asymptotically 
hyperbolic (or asymptotically Euclidean) ends under a condition on the topological pressure of the flow on the trapped set like in \cite{NonZw}. This will be pursued elsewhere. \\

\textbf{Acknowledgement.} We both thank D. Jakobson for pointing out previous works on equidistribution of Eisenstein series and S. Nonnenmacher for helpful discussions on that topic, 
as well as M. Zworski for his interest and urging us to write this down.  
Both authors are supported by ANR grant ANR-09-JCJC-0099-01.

\section{Generalities}

\subsection{Definition of Eisenstein series}
Let $X=\Gamma\backslash \hh^{n+1}$ be a convex co-compact hyperbolic manifold. 
The group $\Gamma$ is a discrete group of orientation preserving 
isometries of $\hh^{n+1}$ with only hyperbolic transformations (i.e. fixing $2$ points on the sphere 
$S^n=\pl\hh^{n+1}$). The limit set is $\Lambda_\Gamma :=\bbar{\Gamma.o}\cap S^n$ where $o\in\hh^{n+1}$ is the center in the unit ball model $B_o(1)\subset \rr^{n+1}$ of $\hh^{n+1}$. The discontinuity set is the complement $\Omega_\Gamma:=S^n\setminus\Lambda_\Gamma$ and $\Gamma$ acts properly discontinously on $\hh^{n+1}\cup \Omega_\Gamma\subset \bbar{B_o(1)}$.
The manifold $X$ compactifies smoothly to a manifold $\bbar{X}$ with boundary $\pl \bbar{X}$ by setting 
$\bbar{X}=\Gamma\backslash(\hh^{n+1}\cup \Omega_\Gamma)$. 
If $g$ denotes the hyperbolic metric on $X$, there exists a smooth boundary defining 
function $x$ such that $x^2g$ extends to a smooth metric on $\bbar{X}$ and that $|dx|_{x^2g}=1$ 
on $\pl\bbar{X}$.  
The Laplacian $\Delta$ on $(X,g)$ has absolutely continuous spectrum on $[n^2/4,\infty)$ and a possibly non-empty finite set of eigenvalues in $(0,n^2/4)$.\\
 By  \cite{MM,GZ}, the resolvent of the Laplacian
\[R_X(\la):=(\Delta-\la(n-\la))^{-1}  \quad \textrm{ defined in the half plane }\rea(\la)>n/2 \]
admits a meromorphic continuation to the whole complex plane $\cc$, with poles of finite rank (i.e. the polar part at a 
pole is a finite rank operator), as a family of bounded operators 
$$R_X(\la): C_0^\infty(X)\to C^\infty(X).$$ 
Using \cite{MM,GZ},
the resolvent integral kernel $R_X(\la;m,m')$ near the boundary $\pl\bbar{X}$ 
has an asympotic expansion given as follows: for any $m\in X$ fixed
\[  m'\to R_X(\la;m ,m' )x(m')^{-\la} \in  C^\infty(\bbar{X})\]
and similarly for $m'\in X$ fixed and $m'\to \pl\bbar{X}$.
The Eisenstein series are functions in $C^\infty(X\x \pl\bbar{X})$ defined by\footnote{The normalization factor 
is not the usual one, but it is the natural one for our equidistribution purpose; this corresponds somehow
to the normalization given by the Weyl law in the co-compact setting.} 
\begin{equation}\label{definitionE}
E_X(\la;m,\xi)=\frac{2\pi^{\ndemi}\Gamma(\la-\ndemi+1)}{2^{-\la}\Gamma(\la)}[x(m')^{-\la}R_X(\la;m,m')]|_{m'=\xi}, \quad  \xi\in \pl\bbar{X}.
\end{equation}
The terminology `series' will become clear later since they can be obtained by 
series over the group. 
Notice that $E_X(\la;\cdot,\xi)$ depends a priori on the choice of the boundary defining function $x$ however
any other choice $x'=\psi(\xi)x+O(x)$ with $\psi\in C^\infty(\pl\bbar{X})$  
would simply amount to multiply $E_X(\la;m,\xi)$ by $\psi(\xi)^{-\la}$. 
Moreover, they are generalized  eigenfunctions of $\Delta_X$
\[(\Delta_X-\la(n-\la))E_X(\la;\cdot,\xi)=0, \quad \forall \xi\in \pl\bbar{X}.\]
The functions $E_X(\la,\cdot,\xi)$ replace the plane wave functions $z\to e^{i\la z.\xi}$ in $\rr^n$ (with $\xi\in S^{n-1}$)
which satisfy $(\Delta_{\rr^n}-\la^2)e^{i\la z.\xi}=0$, they provide a spectral representation of the continous spectrum for $\Delta_X$.

Let us define the constant   
\begin{equation}\label{cla}
C(\la)=\pi^{-\ndemi}2^{-\la}\frac{\Gamma(\la)}{\Gamma(\la-\ndemi)}.
\end{equation}

The spectral measure  $2t d\Pi_X(t) $ of the Laplacian $\Delta_X$ is related to  Eisenstein series by
\begin{equation}\label{stone}
\begin{split}
2td\Pi_X(t;m,m') =& \frac{is}{\pi}(R_X(\ndemi +it;m,m')-R_X(\ndemi -it;m,m'))\\
=&\frac{|C(\ndemi+it)|^2}{2\pi }\int_{\pl \bbar{X}}E_X(\ndemi+it;m,\xi)E_X(\ndemi -it;m',\xi)dv_{\pl\bbar{X}}(\xi)\\
2td\Pi_X(t;m,m)=& \frac{|C(\ndemi+it)|^2}{2\pi }\int_{\pl \bbar{X}}|E_X(\ndemi+it;m,\xi)|^2\, dv_{\pl\bbar{X}}(\xi)
\end{split}
\end{equation}
with $dv_{\pl\bbar{X}}$ the Riemannian measure on $\pl\bbar{X}$ induced by 
the metric $h_0=(x^2g)|_{T\pl X}$. The first identity is Stone's formula, 
the second identity is an application of Green's formula proved in \cite[Prop .1]{Gui} or \cite{Per}.
We notice that the density $|E_X(\ndemi+it;m,\xi)|^2\, dv_{\pl\bbar{X}}(\xi)$ is independent of the choice of 
boundary defining function $x$ used for the definition \eqref{definitionE}, as long as $h_0$ is defined 
by $x^2g|_{T\pl\bbar{X}}$.\\

\subsection{Eisenstein functions on $\hh^{n+1}$}
To fix the ideas, and since it will be used later, let us recall the case of $\hh^{n+1}$.
The resolvent kernel is given (away from the diagonal $m=m'$) by \cite[Sec. 2]{GZ}:
\begin{equation}\label{R0}
\begin{split}
R_{\hh^{n+1}}(\la; m,m')=&\frac{\pi^{-\ndemi} 2^{-2\la-1}\Gamma(\la)}{\Gamma(\la-\ndemi+1)}\cosh^{-2\la}\Big(\frac{d(m,m')}{2}\Big)\\
& \x F\Big(\la,\la-\frac{n-1}{2},2\la-n+1; \cosh^{-2}\Big(\frac{d(m,m')}{2}\Big)\Big)
\end{split}
\end{equation}
where $d(m,m')$ is the hyperbolic distance between $m$ and $m'$, and for $z\in \cc\setminus [1,+\infty)$,
$$F(a,b,c;z)=\frac{\Gamma(c)}{\Gamma(b)\Gamma(c-b)}\int_0^1t^{b-1}(1-t)^{c-b-1}(1-tz)^{-a}dt,$$ 
is the usual hypergeometric function. In the ball model 
$B_o(1):=\{m\in \rr^{n+1}; |m|<1\}$ (with center denoted by $o$), the function  
\begin{equation} \label{x(m)}
x(m):=2e^{-d(m,o)}=2\frac{1-|m|}{1+|m|}
\end{equation}
is a boundary defining function for the closed ball $\bbar{B_o(1)}$.
Moreover, using 
\begin{equation}\label{sinh}
\sinh^2\Big(\frac{d(m,m')}{2}\Big)=\frac{|m-m'|^2}{(1-|m|^2)(1-|m'|^2)},
\end{equation}
we obtain for $E_0(\la;m,\xi):=E_{\hh^{n+1}}(\la;m,\xi)$ defined with the boundary defining function $x$
\begin{equation}\label{E_0}
E_0(\la;m,\xi)=\Big(\frac{1-|m|^2}{4|m-\xi|^2}\Big)^\la=e^{\la \phi_\xi(m)} \quad \textrm{ with }\,\, \phi_\xi(m):=\log\Big(\frac{1-|m|^2}{4|m-\xi|^2}\Big).
\end{equation}  
Let $\la_h:=n/2+i/h$ with $h>0$ small, then as a function of $m$, $E_0(\la_h;m,\xi)$ is a semi-classical Lagrangian distribution associated to the Lagrangian  submanifold  
\[\mc{L}_\xi:= \{ (m,d\phi_\xi(m))\in T_m^*\hh^{n+1}; m\in \hh^{n+1}\}\subset T^*\hh^{n+1}.\]
If $H_\xi(m)$ is the horosphere tangent to $\xi$ and containing $m$, then
$\phi_\xi(m)=\pm d(H_\xi(m),H_\xi(o))$ is the hyperbolic distance between $H_{\xi}(m)$ and $H_{\xi}(o)$. 
Moreover, since $\phi_{R\xi}(m)=\phi_{\xi}(R^{-1}m)$ for any 
$R\in {\rm SO}(n+1)$, the norm $|d\phi_\xi(m)|_{g}$ with respect to the hyperbolic metric 
$g$ is independent of $\xi$. By going through the half-space model $\rr^+_{x}\x\rr^n_{y}$ of $\hh^{n+1}$, 
and choosing $\xi=\infty$, we have $\phi_{\infty}(x,y)=\log(y)$ and thus $|d\phi_\infty(m)|_g=1$ for all $m\in\hh^{n+1}$, which implies that 
\begin{equation}\label{dphi}
|d\phi_\xi(m)|_g=1  \textrm{ for all } \xi\in S^n,m\in\hh^{n+1}.
\end{equation}
The level sets of $\phi_\xi$ are the horospheres tangent to $\xi$, and the covector 
$d\phi_\xi(m)$
is dual via the metric to the gradient $\nabla\phi_\xi(m)$ which is the
unit vector tangent to the geodesic going from $m$ to $\xi$.  The manifold $\mc{L}_\xi$ 
is the \emph{stable manifold} associated to $\xi$. 

Let us describe the action of isometries on Eisenstein funtions.
\begin{lemma}\label{Dgamma}
Let $\gamma\in {\rm Isom}_+(\hh^{n+1})$ be an isometry of $\hh^{n+1}$, then for all $m\in\hh^{n+1}$ and $\xi\in S^n$,
\[ E_0(\la; \gamma m,\gamma\xi)=E_0(\la;m,\xi) |D\gamma(\xi)|^{-\la}\]
where $|D\gamma(\xi)|$ is the Euclidean norm of $D\gamma$ at the point $\xi\in S^n$. 
\end{lemma}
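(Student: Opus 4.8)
The plan is to reduce everything to two elementary conformal identities and then raise to the power $\la$. Recall from \eqref{E_0} that $E_0(\la;m,\xi)=\big(P(m,\xi)/4\big)^{\la}$, where $P(m,\xi):=(1-|m|^2)/|m-\xi|^2$ is the Poisson kernel of the ball $B_o(1)$; hence it suffices to establish the transformation law $P(\gamma m,\gamma\xi)=|D\gamma(\xi)|^{-1}P(m,\xi)$ and then take $\la$-th powers.

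The first identity concerns the numerator. Since $\gamma$ is an orientation preserving isometry of $\hh^{n+1}$, in the ball model it is realized as a Möbius transformation preserving $B_o(1)$, hence conformal for the Euclidean metric with some scalar distortion $|D\gamma(m)|>0$ at each point $m$. Writing out the invariance $\gamma^{*}g=g$ of the Poincaré metric $g=4|dm|^2/(1-|m|^2)^2$ and using conformality gives directly
\[1-|\gamma m|^2=|D\gamma(m)|\,(1-|m|^2),\]
valid for every $m\in B_o(1)$ and, by smoothness, up to the boundary. The second identity concerns the denominator and is the standard metric-distortion property of Möbius maps: for any two points $a,b$ of $\overline{B_o(1)}$ one has $|\gamma a-\gamma b|^2=|D\gamma(a)|\,|D\gamma(b)|\,|a-b|^2$. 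I would verify this on the generators (Euclidean isometries, for which $|D\gamma|\equiv1$ and distances are preserved, and the sphere inversion $x\mapsto x/|x|^2$, for which a one-line computation gives $|\gamma a-\gamma b|=|a-b|/(|a|\,|b|)$ together with $|D\gamma(x)|=|x|^{-2}$) and then invoke multiplicativity of both sides under composition.

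Combining the two identities with $a=m$ and $b=\xi$, the interior factors $|D\gamma(m)|$ cancel and we obtain
\[P(\gamma m,\gamma\xi)=\frac{|D\gamma(m)|\,(1-|m|^2)}{|D\gamma(m)|\,|D\gamma(\xi)|\,|m-\xi|^2}=|D\gamma(\xi)|^{-1}P(m,\xi),\]
whence $E_0(\la;\gamma m,\gamma\xi)=|D\gamma(\xi)|^{-\la}E_0(\la;m,\xi)$ after raising to the power $\la$.

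The only point requiring care, and the main (mild) obstacle, is the consistency of the two occurrences of $|D\gamma|$: in both identities it must be the \emph{same} Euclidean conformal distortion of the Möbius extension of $\gamma$, evaluated at the interior point $m$ and at the boundary point $\xi\in S^n$. Since $\gamma$ extends smoothly and conformally to $\overline{B_o(1)}$, $|D\gamma(\xi)|$ is the genuine boundary value and coincides with the Euclidean operator norm of $D\gamma$ at $\xi$, exactly as in the statement; once this is checked, the rest is bookkeeping.
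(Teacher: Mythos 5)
Your proof is correct, and it takes a genuinely different (though closely related) route from the paper's. The paper's proof takes the invariance of the single hyperbolic quantity \eqref{sinh}, namely $\sinh^2(d(m,m')/2)=|m-m'|^2/((1-|m|^2)(1-|m'|^2))$, multiplies by $(1-|m'|^2)$, and lets $m'\to\xi$, so that the whole transformation law is extracted from the boundary limit $\lim_{m'\to\xi}(1-|\gamma m'|^2)/(1-|m'|^2)=|D\gamma(\xi)|$ — a limit which the paper asserts without proof. You instead avoid any limiting argument by establishing two pointwise Euclidean identities: the numerator law $1-|\gamma m|^2=|D\gamma(m)|(1-|m|^2)$ (read off from $\gamma^*g=g$ plus conformality) and the M\"obius distortion law $|\gamma a-\gamma b|^2=|D\gamma(a)|\,|D\gamma(b)|\,|a-b|^2$ (verified on generators and propagated by the chain rule), then cancel the interior factor $|D\gamma(m)|$. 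The two routes rest on equivalent content — your two identities combined at $(m,m')$ are exactly the invariance of \eqref{sinh}, and your numerator identity, evaluated along $m'\to\xi$, is precisely the limit the paper leaves to the reader — but your version buys explicitness: the distortion law holds on the closed ball wherever $\gamma$ is smooth (the pole $\gamma^{-1}(\infty)$ lies outside $\overline{B_o(1)}$ for $\gamma$ preserving the ball), so it applies directly at the boundary point $\xi$ with no limit needed, and it isolates the classical Poisson-kernel covariance $P(\gamma m,\gamma\xi)=|D\gamma(\xi)|^{-1}P(m,\xi)$, which then trivially yields the lemma upon raising to the power $\la$. Two cosmetic remarks: your claim that the numerator identity extends "up to the boundary" is vacuous there (both sides vanish), but harmless, since you only use it at the interior point $m$; and your generator list should strictly include dilations (Euclidean isometries and the unit-sphere inversion alone do not obviously generate them), but dilations $m\mapsto\lambda m$ satisfy both identities trivially with $|D\gamma|\equiv\lambda$, so nothing breaks.
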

\begin{proof}
We use the fact that the function \eqref{sinh} is invariant with respect to $\gamma$, i.e. 
for  all $m,m'$
\[ \frac{|\gamma m-\gamma m'|}{(1-|\gamma m'|^2)(1-|\gamma m'|^2)}=\frac{|m-m'|^2}{(1-|m|^2)(1-|m'|^2)}\]
and thus multiplying this inequality by $(1-|m'|^2)$ and letting $m'\to \xi\in S^n$, we obtain
$E_0(\la;\gamma m,\gamma \xi)/E_0(\la;m,\xi)=\lim_{m'\to \xi}(\frac{1-|\gamma m'|^2}{1-|m'|^2})^{-\la}$.
This limit is given by $|D\gamma(\xi)|^{-\la}$. 
\end{proof}
Notice that for any M\"obius transformation $\gamma$, 
$|D\gamma(\xi)v|=|D\gamma(\xi)|$ for any $v$ of Euclidean norm $1$. Let us finally give the formula for the 
integral of the hyperbolic Poisson kernel: for all $m\in \hh^{n+1}$, 
\begin{equation}\label{poissonkernel}
\int_{S^n}E_0(n;m,\xi)d\xi =1/C(n)={\rm vol}(S^n).
\end{equation}
where $C(n)$ is defined in \eqref{cla} and $d\xi$ is the canonical measure on $S^n$.

\subsection{Eisenstein series on $\Gamma\backslash \hh^{n+1}$}
\begin{lemma}\label{quotient}
Let $\Gamma$ be a convex co-compact group of isometries of $\hh^{n+1}$ with $\delta_\Gamma<n/2$ and 
$\pi_\Gamma:=\hh^{n+1}\to \Gamma\backslash\hh^{n+1}$ be the quotient map.
Let $\xi\in \Omega_{\Gamma}$ and $m\in\hh^{n+1}$, then for $\rea(\la)>\delta_\Gamma$ the series
\[\til{E}(\la;m,\xi):=\sum_{\gamma\in \Gamma}E_{\hh^{n+1}}(\la;\gamma m,\xi)\]  
converges absolutely and is a $\Gamma$-automorphic function on $\hh^{n+1}$ satisfying
\[E_X(\la;\pi_\Gamma(m),\pi_\Gamma(\xi))=\til{E}(\la;m,\xi)\] 
if $E_X(\la;\cdot,\cdot)$ is defined 
by \eqref{definitionE} with a boundary defining function $x$ so that, in a neighbourhood of  $\xi$, 
$\pi_\Gamma^{*}(x)=(1-|m|)/(1+|m|)$. 
\end{lemma}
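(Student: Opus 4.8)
The plan is to derive all three assertions---absolute convergence, $\Gamma$-automorphy, and the identification with $E_X$---from the single Poincar\'e-series estimate \eqref{delta}. First I would treat convergence. By \eqref{E_0} each summand is positive, $|E_0(\la;\gamma m,\xi)|=(1-|\gamma m|^2)/(4|\gamma m-\xi|^2)^{\rea(\la)}$, and the exact identity $1-|p|^2=(1+|p|)^2e^{-d(p,o)}$ coming from \eqref{x(m)} gives the clean bound $E_0(\rea(\la);p,\xi)\le e^{-\rea(\la)d(p,o)}/|p-\xi|^{2\rea(\la)}$ since $(1+|p|)^2<4$. Because $\xi\in\Omega_\Gamma$ and $\Gamma$ acts properly discontinuously on $\hh^{n+1}\cup\Omega_\Gamma$ while the orbit $\Gamma m$ accumulates only on $\Lambda_\Gamma$, a fixed neighbourhood of $\xi$ contains only finitely many orbit points; hence $|\gamma m-\xi|\ge c>0$ for all but finitely many $\gamma$, and for those $|E_0(\la;\gamma m,\xi)|\le c^{-2\rea(\la)}e^{-\rea(\la)d(\gamma m,o)}$. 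The series is thus absolutely convergent precisely when $\sum_\gamma e^{-\rea(\la)d(\gamma m,o)}<\infty$, i.e. for $\rea(\la)>\delta_\Gamma$ by \eqref{delta}. Automorphy then follows at once: absolute convergence permits the reindexing $\gamma\mapsto\gamma\gamma_0$, so $\til{E}(\la;\gamma_0 m,\xi)=\til{E}(\la;m,\xi)$ and $\til{E}$ descends to $\bbar{X}$.

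For the identification with $E_X$ I would invoke the method of images for the resolvent: for $\rea(\la)>\delta_\Gamma$ the $\Gamma$-average $R_X(\la;m,m')=\sum_{\gamma\in\Gamma}R_{\hh^{n+1}}(\la;m,\gamma m')$ is $\Gamma\times\Gamma$-invariant, carries only the $\gamma=\mathrm{id}$ diagonal singularity (as $\Gamma$ consists of hyperbolic elements and so acts freely), and inverts $\Delta-\la(n-\la)$; it therefore represents the kernel $R_X$ entering \eqref{definitionE}, and its convergence is the estimate of the previous paragraph. Substituting this into \eqref{definitionE} and using the isometry invariance $R_{\hh^{n+1}}(\la;m,\gamma m')=R_{\hh^{n+1}}(\la;\gamma^{-1}m,m')$, the boundary limit $m'\to\xi$ of the $\gamma$-th term is, by \eqref{definitionE} applied on $\hh^{n+1}$ together with \eqref{E_0}, a fixed multiple of $E_0(\la;\gamma^{-1}m,\xi)$. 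Reindexing $\gamma^{-1}\mapsto\gamma$ yields $E_X(\la;\pi_\Gamma(m),\pi_\Gamma(\xi))=\til{E}(\la;m,\xi)$ up to one constant coming from the ratio, near $\xi$, of the defining function used on $\bbar{X}$ to the function $2(1-|m|)/(1+|m|)$ of \eqref{x(m)} used to evaluate \eqref{E_0}; the normalization $\pi_\Gamma^{*}(x)=(1-|m|)/(1+|m|)$ imposed in the statement is exactly the choice calibrating this constant to $1$.

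The main obstacle is legitimizing the interchange of the boundary limit $m'\to\xi$ with the infinite sum over $\Gamma$. For a fixed $\gamma$ the limit is routine, but passing it through the series requires the convergence to be uniform for $m'$ in a neighbourhood of $\xi$, and this is again governed by the first step: for $m'$ near $\xi\in\Omega_\Gamma$ and all but finitely many $\gamma$ the points $\gamma^{-1}m$ lie near $\Lambda_\Gamma$, so $|\gamma^{-1}m-m'|$ is bounded below uniformly and each term is dominated by $c^{-2\rea(\la)}e^{-\rea(\la)d(\gamma^{-1}m,o)}$, a $\gamma$-summable bound independent of $m'$. Dominated convergence then justifies both the image representation of $R_X$ up to the boundary and the termwise passage to the limit, which completes the argument.
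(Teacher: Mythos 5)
Your proposal is correct and follows essentially the same route as the paper's proof: a uniform lower bound on $|\gamma m-\xi|$ for $\xi\in\Omega_\Gamma$ combined with the Poincar\'e-series criterion \eqref{delta} for absolute convergence, the method of images $R_X(\la;\cdot,\cdot)=\sum_{\gamma}R_{\hh^{n+1}}(\la;\gamma\,\cdot,\cdot)$, and a termwise boundary limit $m'\to\xi$ justified by uniform (dominated) convergence near $\xi$ — you in fact spell out the interchange of limit and sum more explicitly than the paper, which asserts uniform convergence on compact sets and leaves the rest to the reader. The only phrase to sharpen is that the averaged kernel ``inverts $\Delta-\la(n-\la)$'' for $\rea(\la)>\delta_\Gamma$: strictly, the identification with $R_X$ is made for $\rea(\la)>n/2$, where the resolvent is the bounded inverse, and then extended to $\rea(\la)>\delta_\Gamma$ by holomorphy of both sides, since for $\rea(\la)\le n/2$ the spectral parameter can lie in the continuous spectrum — which is precisely the order of argument in the paper.
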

\begin{proof}
By combining \eqref{x(m)} and \eqref{E_0}, together with the estimate 
$|\gamma m-\xi|>\eps>0$ for some $\eps$ uniform in $\gamma\in \Gamma$, we deduce the existence of a constant $C_\la$ uniform in $\gamma$ so that
\[|E_0(\la;\gamma,\xi)|\leq C_\la e^{-\la d(\gamma m,o)}.\]
This shows that the series converges in $\rea(\la)>\delta_\Gamma$ by \eqref{delta}.

The resolvent kernel for $\rea(\la)>n/2$ on the quotient is given, for $m,m'\in \hh^{n+1}$ so that $\pi_\Gamma(m)\not=\pi_\Gamma(m')$, by the sum 
\[R_X(\la;\pi_\Gamma(m),\pi_\Gamma(m'))=\sum_{\gamma\in \Gamma}R_{\hh^{n+1}}(\gamma m,m'),\]
and this series converges absolutely in $\rea(\la)>\delta_\Gamma$ by \eqref{R0} and \eqref{delta}, moreover  the convergence is uniform on compact sets in the variable $m,m'$ (in any $C^k$ norms). 
The proof is standard and is left to the reader.
By using \eqref{x(m)} and \eqref{sinh}, it is easily seen that after we multiply this last equation by $x(\pi_\gamma(m'))^{-\la}$, we take the limit $m'\to \xi$ and it is given by 
\[\frac{2\la-n}{C(\la)}\sum_{\gamma\in \Gamma}\lim_{m'\to \xi}(R_{\hh^{n+1}}(\la;\gamma m,m')(2\tfrac{1-|m'|}{1+|m'|})^{-\la})=\til{E}(\la;m,\xi).\]
This concludes the proof.
\end{proof}

\subsection{The Liouville measure}

If $(X,g)$ is an oriented $n+1$ dimensional Riemannian manifold, recall that the Liouville form (or measure) on the unit sphere bundle $SX$
is given by the $2n+1$ form $\mu:=\theta\wedge d\theta^n$ where $\theta$ is the contact form on $SX$
\[\theta_{u}(v):=g(u,\pi_*v) \]
if $\pi:SX\to X$ is the projection.
Isometries $\gamma$ of $X$ act on $SX$ by $\gamma.(m,v):=(\gamma m,d\gamma(m)v)$ and clearly 
$\gamma^*\theta=\theta$. In particular, this implies that $\gamma^*\mu=\mu$. Since 
the group ${\rm Isom}_+(\hh^{n+1})$ acts transitively on $S\hh^{n+1}$, the Liouville form is determined by its 
value at one point, for instance $(m,v)=(o,e_1)$ where $e_1$ is a fixed unit vector of $S_o\hh^{n+1}$. 

Let us define the map $\psi:\hh^{n+1}\x S^n\to S\hh^{n+1}$ by 
\begin{equation}\label{identif}
\psi: (m,\xi) \mapsto (m, \nabla \phi_\xi(m))
\end{equation} 
where $\nabla$ denotes the gradient with respect to the hyperbolic metric $g$.
This map is a diffeomorphism and it identifies the point $(m,v)\in S\hh^{n+1}$ to the point $(m,\xi)\in \hh^{n+1}\x S^n$
so that $\xi$ is the end point of the geodesic starting at $m$ with tangent vector $v$.   
The group of isometries ${\rm Isom}_+(\hh^{n+1})$ acts transitively on $\hh^{n+1}\x S^n$ by $\gamma.(m,\xi):=(\gamma m,\gamma\xi)$ and using Lemma \ref{Dgamma}, we obtain for all $\gamma\in{\rm Isom}_+(\hh^{n+1})$
\begin{equation}\label{gammanu}
\gamma^*\nu=\nu ,\quad \textrm{ where } \nu:=E(n;m,\xi)dv(m)\wedge d\xi 
\end{equation}
where $dv(m)$ and $d\xi$ are the canonical volume forms of $\hh^{n+1}$ and $S^n$. But we also have 
$\psi(\gamma m,\gamma\xi)=\gamma.\psi(m,\xi)$
and therefore we deduce that for all $\gamma\in {\rm Isom}_+(\hh^{n+1})$
\[\gamma^*\psi^*\mu=\psi^*\gamma^*\mu=\psi^*\mu.\]
By \eqref{gammanu}, this implies that both $\nu$ and $\psi^*\mu$ are preserved by the transitive action of ${\rm Isom}_+(\hh^{n+1})$,
and thus there exists a constant $C_n$ such that $\psi^*\mu$, and an easy computation at $m=0,\xi=e_1$ gives 
\begin{equation}\label{psimu}
\psi^*\mu=\nu.
\end{equation}
The same is obviously true with the Liouville measure on $S^*\hh^{n+1}$ by duality, provided $\psi$ is defined 
with $d\phi_\xi(m)$ instead of $\nabla\phi_\xi(m)$.
On a quotient $X=\Gamma\backslash\hh^{n+1}$, we also have $S^*X=\Gamma\backslash S^*\hh^{n+1}\simeq 
\Gamma\backslash (\hh^{n+1}\x S^n)$ and  the Liouville measure on the quotient lifts to the Liouville measure 
$\mu$ on $\hh^{n+1}$.

\section{Proof of equidistribution of Eisenstein series}

\subsection{Equidistribution on $X$} 
In this section, we show equidistribution in space of the Eisenstein series.
\begin{theorem}\label{eqinspace}
Let $X=\Gamma\backslash\hh^{n+1}$ be a convex co-compact quotient with $\delta_{\Gamma}<n/2$, and let $x$ be a boundary defining function of $\bbar{X}$.
Let $a\in C_0^\infty(X)$ and let $E_X(\la;\cdot,\xi)$ be the Eisenstein series defined by \eqref{definitionE} using the defining function $x$ and a point $\xi\in \pl \bbar{X}$ at infinity. Then
 we have as $t\to \infty$, 
\[\int_{X}a(m)\Big|E_X(\ndemi+it;m,\xi)\Big|^2dv(m) = \int_X a(m)
E_X(n;m,\xi)dv(m)+\mc{O}(t^{2\delta-n}).\] 
\end{theorem}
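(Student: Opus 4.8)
The plan is to lift everything to the universal cover $\hh^{n+1}$ and expand both Eisenstein series as sums over the group, reducing the problem to a double sum whose diagonal gives the main term and whose off-diagonal terms are controlled by oscillatory cancellation. Concretely, using Lemma~\ref{quotient} we write $E_X(\ndemi+it;m,\xi)=\sum_{\gamma\in\Gamma}E_0(\ndemi+it;\gamma m,\xi)$ for $m$ in a fundamental domain $F$, and by \eqref{E_0} each term equals $e^{(\ndemi+it)\phi_\xi(\gamma m)}$. Thus
\[
\int_X a\,|E_X|^2\,dv=\sum_{\gamma,\gamma'\in\Gamma}\int_{F}a(m)\,e^{(\ndemi+it)\phi_\xi(\gamma m)}\,e^{(\ndemi-it)\phi_\xi(\gamma' m)}\,dv(m).
\]
Since $a$ is compactly supported, after unfolding one copy of the group this becomes $\sum_{\gamma\in\Gamma}\int_{\hh^{n+1}}a(m)\,e^{(\ndemi+it)\phi_\xi(m)}\,e^{(\ndemi-it)\phi_\xi(\gamma^{-1} m)}\,dv(m)$ where now $a$ is viewed as the $\Gamma$-periodization supported in $F$. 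The key observation is that $\re$ of the amplitude contributes the weight $e^{\frac{n}{2}(\phi_\xi(m)+\phi_\xi(\gamma^{-1}m))}$, whose total mass over $\gamma$ is governed by the Poincaré series and converges because $\delta_\Gamma<n/2$.

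The diagonal term $\gamma=\mathrm{Id}$ has \emph{no oscillation}: the phase $t(\phi_\xi(m)-\phi_\xi(m))=0$ cancels identically, so it contributes exactly $\int_{\hh^{n+1}}a(m)\,e^{n\phi_\xi(m)}\,dv(m)=\int_X a\,E_X(n;\cdot,\xi)\,dv$ by \eqref{E_0}, which is precisely the claimed main term. For the off-diagonal terms $\gamma\neq\mathrm{Id}$, I would exploit non-stationary phase: the phase is $t\,\Psi_\gamma(m)$ with $\Psi_\gamma(m):=\phi_\xi(m)-\phi_\xi(\gamma^{-1}m)$, and its gradient $d\Psi_\gamma(m)=d\phi_\xi(m)-d(\phi_\xi\circ\gamma^{-1})(m)$ is the difference of two unit covectors by \eqref{dphi}. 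This difference vanishes only where the geodesic from $m$ toward $\xi$ coincides with the geodesic from $m$ toward $\gamma\xi$, i.e.\ where $\gamma\xi=\xi$; since $\xi\in\Omega_\Gamma$ and $\Gamma$ acts freely on the discontinuity set, $\gamma\xi\neq\xi$ for $\gamma\neq\mathrm{Id}$, so $d\Psi_\gamma$ is nonvanishing on the compact support of $a$ and we may integrate by parts. Each integration by parts in the nonstationary direction yields a factor $t^{-1}$ together with a derivative falling on the amplitude $a(m)\,e^{\frac{n}{2}(\phi_\xi(m)+\phi_\xi(\gamma^{-1}m))}$; I expect to gain $\mathcal{O}(t^{-1})$ per term while retaining summability over $\Gamma$.

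The main obstacle, and the reason the stated remainder is $\mathcal{O}(t^{2\delta_\Gamma-n})$ rather than merely $\mathcal{O}(t^{-1})$, is that the naive $t^{-1}$ gain is not uniform over the group: for group elements $\gamma$ with small translation length (equivalently, with $\gamma\xi$ close to $\xi$ on $S^n$), the lower bound on $|d\Psi_\gamma|$ degenerates and integration by parts loses its effectiveness. The robust fix, as the introduction flags, is to split the sum according to the size of $d(\gamma o,o)$ (or equivalently the Euclidean separation $|\gamma\xi-\xi|$). For elements with large translation length one integrates by parts repeatedly and sums the resulting rapidly-decaying contributions against the convergent Poincaré series $\sum_\gamma e^{-\frac{n}{2}d(\gamma o,o)}$; for elements with small translation length one instead estimates the oscillatory integral directly, trading the weak phase cancellation against the smallness of the geometric weight $e^{\frac{n}{2}(\phi_\xi(m)+\phi_\xi(\gamma^{-1}m))}$. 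Balancing these two regimes—the crossover being tuned by the competition between the frequency $t$ and the exponent $\delta_\Gamma$ versus $n/2$—is what produces the sharp power $t^{2\delta_\Gamma-n}$. The hard part is therefore the careful \emph{uniform} bookkeeping of constants in terms of $\gamma$ across the decomposition, ensuring all intermediate sums remain dominated by the Poincaré series guaranteed convergent by \eqref{delta} under the hypothesis $\delta_\Gamma<n/2$.
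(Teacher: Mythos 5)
Your overall skeleton is indeed the paper's: the diagonal of the group double sum gives the main term $\int_X a\,E_X(n;\cdot,\xi)\,dv$, the off-diagonal terms are oscillatory integrals handled by non-stationary phase, and the group is split at $d(o,\gamma o)\sim\log t$ with summability coming from $\delta_\Gamma<n/2$. But your core quantitative step is inverted, and as stated the argument fails. For a pair $\gamma\neq\gamma'$ the phase gradient has Euclidean norm comparable to $|\gamma^{-1}\xi-\gamma'^{-1}\xi|$ (see \eqref{euclidean}), and Beardon's isometric-sphere formula \eqref{formulebeardon} yields the lower bound \eqref{boundbelow1}, $|\gamma\xi-\gamma'\xi|\geq C\max(e^{-d(o,\gamma o)},e^{-d(o,\gamma' o)})$. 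Since the orbit $\Gamma\xi$ accumulates only on $\Lambda_\Gamma$ and $\xi\in\Omega_\Gamma$, orbit points never approach $\xi$ itself: elements with small $d(o,\gamma o)$ are finitely many and have phase gradient bounded below, while it is the elements with \emph{large} $d(o,\gamma o)$ --- whose orbit points crowd together near the limit set --- for which the integration-by-parts gain degenerates; quantitatively, each integration by parts costs a factor $t^{-1}\min(e^{d(o,\gamma o)},e^{d(o,\gamma' o)})$, as in \eqref{boundbelow2} and \eqref{jtimes}. Your assignment (integrate by parts for large translation length, direct estimate for small) is therefore exactly backwards, and fails on both ends: for $d(o,\gamma o)>\log t$ each integration by parts loses a factor $e^{d(o,\gamma o)}/t>1$ and gains nothing, while on the small regime your direct, oscillation-free estimate merely reproduces the $\mc{O}(1)$ bound from the Poincar\'e series, with no decay in $t$. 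The paper does the opposite: repeated integration by parts ($j$ large enough) on the regime $d(o,\gamma o)\leq\log t$, and the trivial amplitude bound $e^{-\ndemi(d(o,\gamma o)+d(o,\gamma' o))}$ on the tail $d(o,\gamma o)>\log t$, whose Poincar\'e tail is $\mc{O}(t^{\delta_\Gamma-n/2})$ per factor. Note also that the sharp exponent $t^{2\delta_\Gamma-n}$ relies on Patterson's counting estimate \eqref{pater}, which you never invoke; convergence of the Poincar\'e series \eqref{delta} alone would only yield $\mc{O}(t^{2\delta_\Gamma-n+\epsilon})$.

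A secondary inconsistency concerns your unfolding. After reducing to a single sum over $\beta=\gamma'\gamma^{-1}$, the integration variable runs over the full orbit $\Gamma\cdot{\rm supp}(a)$, which is not compact, so your appeal to non-vanishing of $d\Psi_\beta$ ``on the compact support of $a$'' contradicts your own reduction; worse, for fixed $\beta$ the two unit covectors pointing toward $\xi$ and $\beta^{-1}\xi$ become asymptotically parallel as $m$ leaves every compact set, so the degeneration you pushed out of the group variable reappears in the space variable. The paper sidesteps this entirely by not unfolding: it keeps the double sum over $(\gamma,\gamma')$ with the integral over the fixed compact set ${\rm supp}(a)\subset\mc{F}$, the phase being the Busemann cocycle $B_\xi(\gamma m,\gamma' m)$ of \eqref{buseman}, so that all uniformity (the estimates \eqref{nablaB} and \eqref{boundbelow2}) is tracked through the two distances $d(o,\gamma o)$ and $d(o,\gamma' o)$ alone. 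Your identification of the diagonal with the harmonic density $E_X(n;\cdot,\xi)$ is correct, but the off-diagonal analysis needs to be redone along these lines.
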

\begin{proof}
Let us consider the ball model  $B_o(1)=\{m\in \rr^{n+1}; |m|<1\}$
of $\hh^{n+1}$, with boundary the sphere $S^n$.  Consider a fundamental domain $\mc{F}$ of $\Gamma$ and 
let $\xi\in \Omega_\Gamma\cap \bbar{F}$ be a point in the domain of discontinuity. The boundary defining function $x$ 
of $X$ lifts to an automorphic function $\til{x}$ on $\hh^{n+1}$ and we define 
$\eta(\xi):=2\lim_{m\to \xi}(1-|m|)/\til{x}(m)$.  
By Lemma \ref{quotient}, the Eisenstein series on $X=\Gamma\backslash \hh^{n+1}$ at parameter $\la$ such that ${\rm Re}(\la)>\delta_\Gamma $  is given by 
\begin{equation}\label{defEis}
\begin{gathered}
E_X(\la; \pi_\Gamma(m),\pi_\Gamma(\xi))=\sum_{\gamma\in \Gamma} E_{0}(\la;\gamma m,\xi)\eta(\xi)^{\la}=\sum_{\gamma\in \Gamma} \Big(E_{0}(1;\gamma m,\xi)\eta(\xi)\Big)^\la \\
E_0(1;m,\xi):= \frac{1-|m|^2}{4|m-\xi|^2}, \end{gathered}
\end{equation}
for $m\in \hh^{n+1}$. For simplicity, we shall identify $\pi_\Gamma(m)$ with $m$ in the fundamental domain.
We are interested in the pointwise norm (for $t\in \rr$)
\[\begin{split}
\Big|E_X(\ndemi+it; m,\xi)\Big|^2= &\sum_{\gamma\in \Gamma} \Big(E_0(1;\gamma m,\xi)\eta(\xi)\Big)^{n}\\
&+\eta(\xi)^n\sum_{\gamma\not=\gamma'} \Big(E_0(1;\gamma m,\xi)E_0(1;\gamma'm,\xi)\Big)^{\ndemi} 
\Big(\frac{E_0(1;\gamma m,\xi)}{E_0(1;\gamma'm,\xi)}\Big)^{it}\\
\Big|E_X(\ndemi+it; m,\xi)\Big|^2=& E_X(n;m,\xi)+\eta(\xi)^n\sum_{\gamma\not=\gamma'} \Big(E_0(1;\gamma m,\xi)E_0(1;\gamma'm,\xi)\Big)^{\ndemi} 
\Big(\frac{E_0(1;\gamma m,\xi)}{E_0(1;\gamma'm,\xi)}\Big)^{it}
\end{split}\] 
Let $a\in C_0^\infty(X)$ and consider the off-diagonal sum $\sum_{\gamma\not=\gamma'}I_{\gamma,\gamma'}(t)$ where
\begin{equation}\label{offdiag}
I_{\gamma,\gamma'}(t):=\int_{X}a(m) \Big(E_0(1;\gamma m,\xi)E_0(1;\gamma'm,\xi)\Big)^{\ndemi} 
\Big(\frac{E_0(1;\gamma m,\xi)}{E_0(1;\gamma'm,\xi)}\Big)^{it}dv(m)
\end{equation}
To end the proof, we need to show the 
\begin{lemma}\label{Idecays}
As $t\to \infty$, the off-diagonal sum satisfies 
$\sum_{\gamma\not=\gamma'}I_{\gamma,\gamma'}(t)=\mc{O}(t^{2\delta_\Gamma-n})$
\end{lemma}
\begin{proof}
The proof reduces to some non-stationary phase in $t$ on a finite number of terms in each factor of 
$\Gamma\x \Gamma$, essentially those $\gamma$ which satisfy $d(o,\gamma o)\leq \log t$.
Let us first define the Buseman function
\begin{equation}\label{buseman}
B_\xi(m,m'):=\log(1-|m|^2)-\log(|m-\xi|^2)-\log(1-|m'|^2)+\log(|m'-\xi|^2)
\end{equation}
which satisfies the cocycle condition $B_\xi(m,m'')=B_\xi(m,m')+B_\xi (m',m'')$ and 
for any $\gamma\in {\rm Isom}(\hh^{n+1})$
\[B_{\gamma\xi}(\gamma m,\gamma m')=B_\xi(m,m').\]
One can rewrite \eqref{offdiag} as 
\[\int_{X}a(m) \Big(E_0(1;\gamma m,\xi)E_0(1;\gamma'm,\xi)\Big)^{\ndemi} 
e^{itB_\xi(\gamma m,\gamma' m)}dv(m)
\] 
The phase $B_\xi(\gamma m,\gamma' m)=B_{\gamma^{-1}\xi}(m,\gamma^{-1}o)+ B_{{\gamma'}^{-1}\xi}({\gamma'}^{-1}o,m)$
has for gradient (with respect to Euclidean metric) in $m$ 
\[
\nabla_{m}B_\xi(\gamma m,\gamma' m)=2\Big(\frac{m-\gamma^{-1}\xi}{|m-\gamma^{-1}\xi|^2}-
\frac{m-{\gamma'}^{-1}\xi}{|m-{\gamma'}^{-1}\xi|^2}\Big)
\]
with Euclidean norm given by 
\begin{equation}\label{euclidean}
\Big|\nabla_{m}B_\xi(\gamma m,\gamma' m)\Big|=\frac{2|\gamma^{-1}\xi-{\gamma'}^{-1}\xi|}{|m-{\gamma'}^{-1}\xi|.|m-\gamma^{-1}\xi|}.
\end{equation}
Remark that $\nabla_mB_\xi(\gamma m,\gamma' m)=Q_m(\gamma^{-1}\xi,\gamma^{-1}\xi')/(|m-{\gamma'}^{-1}\xi|^2.|m-\gamma^{-1}\xi|^2)$ for some $Q_m(u,v)$ polynomial in $(u,v)$ and with $Q_m(u,u)=0$, moreover it is smooth in $m$
in a compact set $K\subset \hh^{n+1}$, we then easily deduce that  for each $\alpha\in \nn^{n+1}$ there is $C_\alpha>0$ such that for all $m\in K$ and $\alpha\in \nn^{n+1}$
\begin{equation}\label{nablaB}
\Big|\pl^\alpha_m \nabla_{m}B_\xi(\gamma m,\gamma' m)\Big|\leq C_\alpha|\gamma^{-1}\xi-{\gamma'}^{-1}\xi|.
\end{equation}
Let $o$  be the center of the unit ball in $\rr^{n+1}$.
For $\gamma$  a M\"obius transformation preserving the unit ball, we 
define  $a_\gamma=\gamma^{-1}(\infty) \in\rr^{n+1}$ the center of the isometric sphere $S(a_\gamma,r_\gamma)$ 
of $\gamma$, where $r_\gamma=1/\sinh(\tfrac{1}{2}d(o,\gamma o))$ is the radius of $S(a_\gamma,r_\gamma)$ (here $d(\cdot,\cdot)$ is the hyperbolic distance). By \S 3.5 in Beardon \cite{Bear}, we have 
 for any $\xi,\xi'\in \bbar{B_o(1)}$
\begin{equation}\label{formulebeardon} 
|\gamma \xi-\gamma \xi'|=\frac{1}{\sinh^2(\tfrac{1}{2}d(o,\gamma o))} \frac{|\xi-\xi'|}{|\xi -a_{\gamma}| \, |\xi'-a_\gamma|}.
\end{equation}
Since $|a_\gamma|$ is bounded uniformly, we can set $\xi'=\gamma^{-1}\circ \gamma' \xi$ and we
 deduce directly that $ |\gamma \xi-\gamma' \xi|^2\geq Ce^{-d(o,\gamma o)}$ and by symmetry in $\gamma,\gamma'$ this gives 
 \begin{equation} \label{boundbelow1}
 |\gamma \xi-\gamma' \xi|\geq C\max (e^{-d(o,\gamma o)},e^{-d(o,\gamma' o)})
 \end{equation} 
uniformly in $\gamma\not= \gamma'$ and $\xi\in \Omega_\Gamma$.
By combining \eqref{euclidean}, \eqref{nablaB}and \eqref{boundbelow1}, we deduce that for $m$ in a  compact set $K$ of $\hh^{n+1}$ one has 
\begin{equation}\label{boundbelow2}
\Big|\pl^\alpha_m \frac{\nabla_{m}B_\xi(\gamma m,\gamma' m)}{|\nabla_{m}B_\xi(\gamma m,\gamma' m)|^2}\Big| 
\leq C_\alpha \min (e^{d(o,\gamma o)},e^{d(o,\gamma' o)}).
\end{equation}
For each $\gamma,\gamma' \in \Gamma$, we can therefore integrate by part to get 
\[\begin{split}
I_{\gamma,\gamma'}(t)=& \frac{1}{it}\int_{X}a(m) \Big(E_0(1;\gamma m,\xi)E_0(1;\gamma'm,\xi)\Big)^{\ndemi}
\nabla_m(e^{it B_{\xi}(\gamma m,\gamma'm)}). \frac{\nabla_{m}B_\xi(\gamma m,\gamma' m)}{|\nabla_{m}B_\xi(\gamma m,\gamma' m)|^2}dv(m)\\
=& \frac{1}{it}\int_{X}e^{it B_{\xi}(\gamma m,\gamma'm)}\nabla^*\Big(a(m)\Big(E_0(1;\gamma m,\xi)E_0(1;\gamma'm,\xi)\Big)^{\ndemi}
\frac{\nabla_{m}B_\xi(\gamma m,\gamma' m)}{|\nabla_{m}B_\xi(\gamma m,\gamma' m)|^2}\Big)dv(m)
\end{split}\]
where $\nabla^*$ is the adjoint of $\nabla$ with respect to the Riemannian measure $dv(m)$
For $\xi\in \Omega_{\Gamma}$, there exists $C>0$ depending only on $K$ such that for 
all $m\in K$ and $\gamma\in \Gamma$, one has 
\[|\gamma m-\xi|>C , \quad (1-|\gamma m|^2)\leq Ce^{-d(o,\gamma o)}, \quad
|D\gamma(m)|\leq Ce^{-d(o,\gamma o)}.\]
The  last identity comes for instance from \eqref{formulebeardon}. Combining these estimates with 
\eqref{boundbelow2}, we obtain that there is $C>0$ such that for all $\gamma\not=\gamma'$ and $m\in K$
\[\begin{gathered}
\Big|\nabla^*\Big(a(m)\Big(E_0(1;\gamma m,\xi)E_0(1;\gamma'm,\xi)\Big)^{\ndemi}
\frac{\nabla_{m}B_\xi(\gamma m,\gamma' m)}{|\nabla_{m}B_\xi(\gamma m,\gamma' m)|^2}\Big)\Big|\leq \\
Ce^{-\ndemi (d(o,\gamma o)+d(o,\gamma' o))}\min(e^{d(o,\gamma o)},e^{d(o,\gamma'o)}).
\end{gathered}\] 
We thus have the estimate, which is uniform in $\gamma\not=\gamma'$
\[ |I_{\gamma,\gamma'}(t)|\leq C t^{-1}e^{-\ndemi (d(o,\gamma o)+d(o,\gamma' o))}\min(e^{d(o,\gamma o)},e^{d(o,\gamma'o)}).\]
We can iterate this argument and get for $j\in \nn$
\begin{equation}\label{jtimes}
|I_{\gamma,\gamma'}(t)|\leq C t^{-j}e^{-\ndemi (d(o,\gamma o)+d(o,\gamma' o))}\min(e^{jd(o,\gamma o)},e^{jd(o,\gamma'o)}).\end{equation}
Let us decompose $\Gamma$ into  
\[\Gamma=\Gamma_{\leq}\sqcup \Gamma_{>}:\{\gamma\in \Gamma; d(o,\gamma)\leq \log(t)\}\sqcup   \{\gamma\in \Gamma; d(o,\gamma)>\log(t)\}\]
and then the double sum $\sum_{(\gamma,\gamma')\in \Gamma\x \Gamma\setminus {\rm diag}}
I_{\gamma,\gamma'}(t)$ into $4$ parts involving $\Gamma_\alpha\x \Gamma_\beta\setminus{\rm diag}$ for $\alpha,\beta\in \{\leq ,>\}$. For the elements in $\Gamma_\leq$, we can now take advantage of the bound 
\eqref{jtimes} with $j$ taken larger than $n/2-\delta_\Gamma$.
Using the counting estimate (proved by Patterson \cite{PatArx})
\begin{equation}\label{pater}
\sharp\{ \gamma\in \Gamma; d(o,\gamma o)\leq T\}=\mc{O}(e^{\delta_\Gamma T})
\end{equation}
it is easily seen that for each $\alpha,\beta\in \{\leq ,>\}$, we have 
\[ \sum_{(\gamma,\gamma')\in \Gamma_{\alpha}\x \Gamma_{\beta}\setminus {\rm diag}}I_{\gamma,\gamma'}(t)=\mc{O}(t^{2\delta_\Gamma-n}).\]
Let us for instance write the case $\alpha=\beta=\leq$, then by \eqref{jtimes} and \eqref{pater} we have for 
any $j>n/2-\delta_\Gamma$ integer
\[\begin{split}
\sum_{\substack{d(o,\gamma o)\leq \log t\\ d(o,\gamma' o)\leq \log t, \gamma'\not=\gamma}}I_{\gamma,\gamma'}(t)=&
\mc{O}(t^{2j})\sum_{d(o,\gamma o)\leq \log  t}e^{(j-\ndemi) d(o,\gamma o)}
\sum_{d(o,\gamma' o)\leq \log  t}e^{(j-\ndemi) d(o,\gamma' o)}\\
=&\mc{O}(t^{n-2\delta_\Gamma})
\end{split}\]
and the other cases are similar.
\end{proof}
This achieves the proof of the Theorem. 
\end{proof}
Notice that $E_X(n;m,\xi)$ is a harmonic function, more precisely this is the harmonic function on $X$ with distributional boundary value the Dirac mass $\delta_\xi$ at $\xi$.

As a corollary, we obtain 
\begin{cor}\label{firstcor}
Let $(X,g)=\Gamma\backslash\hh^{n+1}$ be a convex co-compact quotient with $\delta_{\Gamma}<n/2$, let $x$ be a boundary defining function of $\bbar{X}$ and let $dv_{\pl \bbar{X}}$ be the Riemannian measure 
on the conformal boundary induced by the smooth metric $x^2g|_{\pl\bbar{X}}$. 
Let $a\in C_0^\infty(X)$ and let $E_X(\la;\cdot,\xi)$ be the Eisenstein series defined by \eqref{definitionE} using the defining function $x$ and a point $\xi\in \pl \bbar{X}$ at infinity. Then, 
 we have as $t\to \infty$
\[\int_{\pl\bbar{X}}\int_{X}a(m)\Big|E_X(\ndemi+it;m,\xi)\Big|^2dv(m)dv_{\pl\bbar{X}}(\xi) = 
{\rm vol}(S^n) \int_X a(m)dv(m)+\mc{O}(t^{2\delta_\Gamma-n}).\]
\end{cor}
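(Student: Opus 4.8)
The plan is to deduce the corollary directly from Theorem \ref{eqinspace} by integrating the pointwise equidistribution statement over $\xi\in\pl\bbar{X}$. Two points have to be verified: first, that the remainder $\mc{O}(t^{2\delta_\Gamma-n})$ in Theorem \ref{eqinspace} is \emph{uniform} in $\xi$, so that integrating it over the compact boundary preserves its order; second, that the integrated main term equals ${\rm vol}(S^n)\int_X a\,dv$.

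For the uniformity I would revisit the proof of Lemma \ref{Idecays}. The boundary $\pl\bbar{X}=\Gamma\backslash\Omega_\Gamma$ is compact, so one may fix a fundamental domain $\mc{F}_\partial\subset\Omega_\Gamma$ whose closure $\overline{\mc{F}_\partial}$ is a compact subset of $\Omega_\Gamma$, hence bounded away from $\Lambda_\Gamma$. All the constants entering the earlier estimates — the lower bounds $|\gamma m-\xi|>C$ and \eqref{boundbelow1}, the factor $\eta(\xi)$, and the constants $C_\alpha$ produced by the successive integrations by parts — depend on $\xi$ only through its distance to $\Lambda_\Gamma$, and are therefore uniformly bounded for $\xi\in\overline{\mc{F}_\partial}$ and $m$ in the fixed compact set $K\supset\supp(a)$. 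Consequently the bound of Lemma \ref{Idecays} holds uniformly in $\xi$, and since $\pl\bbar{X}$ has finite volume the integral in $\xi$ of the remainder is again $\mc{O}(t^{2\delta_\Gamma-n})$.

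It then remains to evaluate, by Fubini,
\[\int_{\pl\bbar{X}}\Big(\int_X a(m)E_X(n;m,\xi)\,dv(m)\Big)dv_{\pl\bbar{X}}(\xi)=\int_X a(m)\Big(\int_{\pl\bbar{X}}E_X(n;m,\xi)\,dv_{\pl\bbar{X}}(\xi)\Big)dv(m),\]
so everything reduces to showing that the inner boundary integral equals ${\rm vol}(S^n)$ for each fixed $m$. Here I would use that the density $E_X(n;m,\xi)\,dv_{\pl\bbar{X}}(\xi)$ is independent of the boundary defining function (this is the same invariance noted after \eqref{stone}): writing $E_X(n;m,\xi)=\sum_{\gamma\in\Gamma}E_0(n;\gamma m,\xi)\eta(\xi)^n$ as in \eqref{defEis}, and noting that $dv_{\pl\bbar{X}}$ lifts to $\eta(\xi)^{-n}$ times the round measure $d\xi$ on $S^n$, the factors $\eta(\xi)^{\pm n}$ cancel and the density lifts to $\sum_{\gamma}E_0(n;\gamma m,\xi)\,d\xi$. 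By Lemma \ref{Dgamma} together with the conformal change of variables $\xi\mapsto\gamma\xi$ (whose Jacobian is $|D\gamma|^{n}$), the one-density $E_0(n;m,\xi)\,d\xi$ is $\Gamma$-invariant, so summing over $\gamma$ and integrating over $\mc{F}_\partial$ unfolds to the integral of the single term $E_0(n;m,\xi)\,d\xi$ over $\bigcup_\gamma\gamma^{-1}\mc{F}_\partial=\Omega_\Gamma$. Since $\delta_\Gamma<n/2<n$, the limit set $\Lambda_\Gamma$ has vanishing $S^n$-measure, so this equals $\int_{S^n}E_0(n;m,\xi)\,d\xi={\rm vol}(S^n)$ by the Poisson kernel identity \eqref{poissonkernel}, which is independent of $m$ and yields the claimed main term.

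The main obstacle is the unfolding step: it requires justifying the interchange of the sum over $\Gamma$ with the boundary integral, which is licit because the series $\sum_\gamma E_0(n;\gamma m,\xi)$ converges absolutely for $n>\delta_\Gamma$, and it requires correctly bookkeeping the powers of $\eta(\xi)$ relating $dv_{\pl\bbar{X}}$ to $d\xi$. Once the $\eta$-factors are seen to cancel, the remainder is the standard tiling argument combined with the nullity of $\Lambda_\Gamma$. The uniformity of the remainder is routine but must be stated carefully, since it is precisely what allows the $\xi$-integral to be taken term by term.
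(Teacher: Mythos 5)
Your proposal is correct and follows essentially the same route as the paper's proof: uniformity in $\xi$ of the remainder in Theorem \ref{eqinspace} (which the paper simply asserts as clear from its proof and you justify in slightly more detail), followed by the evaluation of $\int_{\pl\bbar{X}}E_X(n;m,\xi)\,dv_{\pl\bbar{X}}(\xi)$ via the cancellation of the $\eta(\xi)^{\pm n}$ factors, Lemma \ref{Dgamma} with the Jacobian $|D\gamma(\xi)|^n$, the tiling of $\Omega_\Gamma$ by translates of the boundary fundamental domain (full measure in $S^n$ since $\delta_\Gamma<n$), and the Poisson kernel identity \eqref{poissonkernel}. No changes needed.
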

\begin{proof}
The statement in Theorem \eqref{eqinspace} is uniform in $\xi\in\pl\bbar{X}$, this is clear 
from the proof of that Theorem. If $\mc{F}$ is a fundamental domain of $X$  and 
$\eta$ is defined as in the proof of Theorem \eqref{eqinspace} in $\bbar{\mc{F}}\cap S^n$, then
the measure $dv_{\pl\bbar{X}}$ pulls back to $\bbar{\mc{F}}\cap S^n$ by $\pi_\Gamma$ to the measure
$\eta(\xi)^{-n}d\xi$ where $d\xi$ is the canonical measure on $S^n$. Since 
$E_X(n;\pi_{\gamma}(m),\pi_{\gamma}(\xi))=\eta(\xi)^n\sum_{\Gamma}E_0(n;\gamma m,\xi)$, we have \[\begin{split}
\int_{\bbar{\mc{F}}\cap S^n}E_X(n;\pi_{\gamma}(m),\pi_{\gamma}(\xi))\eta(\xi)^{-n}d\xi=&
\sum_{\gamma\in \Gamma}\int_{\bbar{\mc{F}}\cap S^n}E_0(n;m,\gamma^{-1}\xi)|D\gamma(\gamma^{-1}\xi)|^{-n}d\xi\\
=& \sum_{\gamma\in \Gamma}\int_{\gamma(\bbar{\mc{F}}\cap S^n)}E_0(n;m,\xi)d\xi\\
\int_{\pl\bbar{X}}E_X(n;m,\xi)dv_{\pl\bbar{X}}(\xi)=&\int_{\Omega_\Gamma}E_0(n;m,\xi)d\xi=\int_{S^n}E_0(n;m,\xi)d\xi={\rm vol}(S^n)
\end{split}\] 
where in the second line we used 
Lemma \ref{Dgamma}, in the third we used that the Jacobian of $\gamma$ at $\xi\in S^n$ 
is $|D\gamma(\xi)|^n$, the fourth line we used that the set of discontinuity $\Omega_\Gamma=\cup_{\gamma}\gamma(\bbar{\mc{F}}\cap S^n)$ has full measure in $S^n$ (for the standard measure on $S^n$).  This ends the proof.
\end{proof}

\subsection{Equidistribution in phase space}
In this section, we extend the previous equidistribution result to ``microlocal lifts" of the measures 
$$\Big|E_X(\ndemi+it;m,\xi)\Big|^2dv(m).$$
To this end, we recall a few basic facts on pseudo-differential operators, including semi-classical ones.
The proof of the micro-local extension will be based on semi-classical quantization and action on Lagrangian states.
\subsection{Pseudo-differential and semi-classical pseudo-differential operators}
Let $X$ be a complete manifold of dimension $d$. For $s\in\rr$, we define the space  $\Psi_c^{s}(X)$  of compactly supported pseudo-differential operators of order $s$  on $X$ to be the set of bounded operators 
$A: C^\infty(X)\to C_0^\infty(X)$, with Schwartz kernel compactly supported in $X\x X$, and which decomposes as a sum $A=A_{\rm sm}+A_{\rm sg}$
where $A_{\rm sm}$ has a smooth Schwartz kernel and $A_{\rm sg}$ 
can be written in any local chart $U$ with coordinate $z$ under the following form: 
for all $f\in C_0^\infty(X)$ supported in $U$, 
\[A_{\rm sg}f(z)=\frac{1}{(2\pi)^{d}}\int e^{i(z-z')\xi} a(z,\xi)f(z')dz'd\xi\]
with $a(z,\xi)$ is a {\it classical symbol} of order $s$ in $U$.  
A  classical symbol $a$ is an element in $C^\infty(U\x\rr^{d})$ such that 
for all $\alpha,\gamma,$ there exists $C>0$ so that
\[ |\pl_z^\gamma \pl_{\xi}^\alpha a(z,\xi)|\leq C(1+|\xi|)^{s-|\alpha|}, \quad 
a(z,\xi)\sim_{|\xi|\to \infty} \sum_{j=0}^\infty a_j(z,\xi)\]  
with $a_j$ some homogeneous functions of degree $s-j$ in $\xi$. The \emph{principal symbol} of $A$ is defined 
to be $a_0$, and it is invariantly defined as a homogeneous section of degree $s$ on $T^*X$.

Let $h\in(0,1)$ be a small parameter. For $s,k\in \rr$, we define the space $\Psi_{c}^{s,k}(X)$ of compactly supported semi-classical pseudo-differential operators of order $s$ to be the set of $h$-dependent bounded operators 
$A_h: C^\infty(X)\to C_0^\infty(X)$, with Schwartz kernel compactly supported in $X\x X$, and which decomposes as a sum $A_h=A_{h,{\rm sm}}+A_{h,{\rm sg}}$
where $A_{h,{\rm sm}}$ has a smooth Schwartz kernel satisfying $||A_{h,{\rm sm}}||_{L^2\to L^2}=O(h^\infty)$
and $A_{h,{\rm sg}}$ can be written in any local chart $U$ with coordinate $z$ under the following form: 
for all $f\in C_0^\infty(X)$ supported in $U$, 
\[A_{h,{\rm sg}}f(z)=\frac{1}{(2\pi h)^{d}}\int e^{i\frac{(z-z').\xi}{h}} a(h;z,\xi)f(z')dz'd\xi\]
with $a(h;z,\xi)$ a {\it semi-classical symbol} of order $s$ in $U$. A  semi-classical symbol $a(h; \cdot,\cdot)$ is an $h$-dependent element  in $C^\infty(U\x\rr^{d})$ such that 
for all $\alpha,\gamma$ and all $N$, there exists $C>0$  so that
\[\begin{gathered} 
|\pl_z^\gamma \pl_{\xi}^\alpha a(h;z,\xi)|\leq Ch^{-k}(1+|\xi|)^{s-|\alpha|}, \,\,
a(h;z,\xi)= \sum_{j=0}^{N-1} h^{-k+j}a_j(z,\xi)+h^Nr_{N}(h;z,\xi)
\end{gathered}\] 
where $a_j,r_N$ are smooth functions satisfying 
\[|\pl_z^\gamma \pl_{\xi}^\alpha a_j(z,\xi)|\leq C(1+|\xi|)^{s-|\alpha|},\quad 
|\pl_z^\gamma \pl_{\xi}^\alpha r_{N}(h;z,\xi)|\leq Ch^{-k}(1+|\xi|)^{s-N-|\alpha|}.\]
The space $\Psi_{c}^{0,0}(X)$ is an algebra under composition of operators, and the principal symbol of a 
composition is the product of their principal symbol. 
The \emph{semi-classical principal symbol} of $A_h$ is defined 
to be $h^{-k}a_0$, and it is invariantly defined as a section of $T^*X$. The \emph{microsupport} ${\rm WF}_h(A_h)$ of $A_h$ is the complement of the set of points $(z,\xi)\in T^*X$ such that 
$|\pl^\alpha a_h|=\mc{O}(h^\infty)$  near $(z,\xi)$, for all $\alpha$. 

\begin{lemma}\label{compos}
Let $B\in \Psi_c^0(X)$ and $A_h\in \Psi_c^{0,0}(X)$ with microsupport satisfying  
$${\rm dist}({\rm WF}_h(A_h),T^*_0X)>\eps,$$ 
for some $\eps>0$ uniform in  $h$, and where 
$T^*_0X$ is the zero section in $T^*X$.
Let $a_0,b_0$ be the semi-classical principal symbol and classical principal symbol of $A_h$ and $B$.
Then $BA_h\in \Psi_c^{0,0}(X)$ and the principal symbol of $BA_h$ is 
$b_0a_0$.  
\end{lemma}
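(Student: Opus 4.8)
The plan is to reduce the mixed composition $BA_h$ to a composition of two genuine semi-classical operators of order $(0,0)$, so that the algebra property of $\Psi_c^{0,0}(X)$ recalled above applies directly, and to exploit the microsupport hypothesis to discard exactly the region near the zero section where a classical operator of order $0$ fails to be semi-classical of order $(0,0)$. Since both $B$ and $A_h$ have compactly supported Schwartz kernels and the assertion is local, I would work in a fixed coordinate chart $U$ and track full symbols there. Because ${\rm dist}({\rm WF}_h(A_h),T^*_0X)>\eps$, I would pick $\psi\in C^\infty(\rr^d)$ with $\psi(\xi)=0$ for $|\xi|\le\eps/4$ and $\psi(\xi)=1$ for $|\xi|\ge\eps/2$, and set $\Psi_h:=\Op_h(\psi)\in\Psi_c^{0,0}(X)$ (its symbol is $z$-independent, so $k=0$). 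Since $\psi\equiv1$ on a neighbourhood of ${\rm WF}_h(A_h)$, the standard semi-classical microlocal calculus gives that $(1-\Psi_h)A_h$ is a smoothing operator with all seminorms, in particular its $L^2\to L^2$ norm, of size $\mc{O}(h^\infty)$. As $B\in\Psi_c^0(X)$ is $L^2$-bounded and compactly supported, this yields
\[BA_h=(B\Psi_h)A_h+\mc{O}(h^\infty),\]
where the error has smooth kernel and is negligible in the sense of the term $A_{h,{\rm sm}}$ in the definition of $\Psi_c^{0,0}(X)$.

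The heart of the matter is to show that $B\Psi_h\in\Psi_c^{0,0}(X)$ with semi-classical principal symbol $b_0\psi$. Writing $b(z,\zeta)$ for the full classical symbol of $B$ in $U$ and substituting $\zeta=\xi/h$, one sees that $B$ takes the semi-classical form with $h$-dependent symbol $\tilde b(h;z,\xi):=b(z,\xi/h)$. This is \emph{not} an admissible symbol of order $(0,0)$ near $\xi=0$, since $\pl_\xi^\alpha\tilde b=h^{-|\alpha|}(\pl_\zeta^\alpha b)(z,\xi/h)$ is only bounded by $C(h+|\xi|)^{-|\alpha|}$, which is uncontrolled as $\xi\to0$; this is precisely the obstruction that the microsupport hypothesis is designed to remove, and it is the main difficulty of the lemma. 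On $\supp\psi\subset\{|\xi|\ge\eps/4\}$, however, $\xi/h\to\infty$ uniformly, so $(h+|\xi|)^{-|\alpha|}\le C(1+|\xi|)^{-|\alpha|}$ with constants uniform in $h$, giving order $(0,0)$ symbol bounds with $k=0$ for $\tilde b\,\psi$. Moreover, because $\psi(\xi)=\psi(h\zeta)$ is a Fourier multiplier in the chart, the composition is exact, $B\Psi_h=\Op\big(b(z,\zeta)\psi(h\zeta)\big)$, so its symbol is exactly $\tilde b\,\psi$; feeding in the classical expansion $b\sim\sum_j b_j$ with $b_j$ homogeneous of degree $-j$ and using $b_j(z,\xi/h)=h^{j}b_j(z,\xi)$ produces the semi-classical expansion
\[\tilde b(h;z,\xi)\,\psi(\xi)\sim\sum_{j\ge0}h^{j}\,b_j(z,\xi)\psi(\xi),\]
whose leading term is $b_0(z,\xi)\psi(\xi)$. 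Hence $B\Psi_h\in\Psi_c^{0,0}(X)$ with semi-classical principal symbol $b_0\psi$.

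To conclude, both $B\Psi_h$ and $A_h$ lie in $\Psi_c^{0,0}(X)$, so the algebra property recalled above gives $(B\Psi_h)A_h\in\Psi_c^{0,0}(X)$ with semi-classical principal symbol $(b_0\psi)\,a_0$. Since ${\rm WF}_h(A_h)\subset\{|\xi|>\eps\}\subset\{\psi=1\}$ and the principal symbol $a_0$ is microsupported in ${\rm WF}_h(A_h)$, we have $\psi a_0=a_0$, so this principal symbol is simply $b_0a_0$ (equivalently $b_0(z,\xi)a_0(z,\xi)$, the homogeneity of $b_0$ making the factor $h$ invisible on $S^*X$). Combined with $BA_h=(B\Psi_h)A_h+\mc{O}(h^\infty)$, this shows $BA_h\in\Psi_c^{0,0}(X)$ with principal symbol $b_0a_0$, as claimed.
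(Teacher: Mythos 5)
Your proof is correct and follows essentially the same route as the paper: both exploit the hypothesis ${\rm dist}({\rm WF}_h(A_h),T^*_0X)>\eps$ to insert a frequency cutoff $\psi(\xi)$ vanishing near the zero section --- your $B_{\rm sg}\Psi_h=\Op\bigl(b(z,\xi/h)\psi(\xi)\bigr)$ is exactly the paper's operator $B^1_{\rm sg}$ with $\psi=1-\chi$ --- and both use the homogeneity $b_j(z,\xi/h)=h^jb_j(z,\xi)$ to recognize it as an element of $\Psi_c^{0,0}(X)$ with semi-classical principal symbol $b_0\psi$, concluding via the algebra property and $\psi a_0=a_0$. The only cosmetic difference is in disposing of the errors: the paper kills $(B_{\rm sg}-B^1_{\rm sg})A_h$ and $B_{\rm sm}A_h$ directly by non-stationary phase, whereas you kill $(1-\Psi_h)A_h$ by disjointness of microsupports and then invoke the $L^2$-boundedness of $B$ (note only that $B_{\rm sm}\Psi_h$, which your chart-level symbol computation does not cover, is likewise $\mc{O}(h^\infty)$-smoothing by the same integration by parts, since $\psi$ vanishes near $\xi=0$).
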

\begin{proof} We decompose $B=B_{\rm sm}+B_{\rm sg}$, and using an adequate change of variables $B_{\rm sg}$ is given locally by
\[Bf(z)=\frac{1}{(2\pi h)^d}\int e^{i\frac{(z-z').\xi}{h}}b(z,\xi/h)f(z')d\xi dz'.\] 
Let $\chi\in C_0^\infty(\rr^d)$ so that its support does not intersect ${\rm WF}_h(A_h)$. Then 
\[B^1_{\rm sg}: f\mapsto B^1_{\rm sg}f(z)=\frac{1}{(2\pi h)^d}\int e^{i\frac{(z-z').\xi}{h}}(1-\chi(\xi))b(z,\xi/h)f(z')d\xi dz'\]
is an operator in $\Psi_c^{0,0}(X)$ with semi-classical symbol $(1-\chi(\xi))b_0(z,\xi)$. On the other hand, by using non-stationary phase and the fact that ${\rm supp}(\chi)\cap {\rm WF}_{h}(A_h)=\emptyset$, we deduce that   
$(B_{\rm sg}-B^1_{\rm sg})A_h$ has a smooth kernel and is an 
$\mc{O}(h^\infty)$ as an operator on $L^2(X)$. The composition $B_{\rm sm}A_h$ 
has the same properties by applying again non-stationary phase (using that $B_{\rm sm}$ has kernel supported away from the diagonal and $a(h;z,\xi)=0$ near $\xi=0$). We conclude that $BA_h\in \Psi_c^{0,0}(X)$ and its principal symbol is $(1-\chi)b_0a_0=b_0a_0$.
\end{proof}

\subsection{Action of semi-classical operators on semi-classical Lagrangian distributions}
Let us consider a Lagrangian submanifold 
$\mc{L}\subset T^*X$ which is the graph of $d\varphi$ for some smooth function $\varphi$ on $X$
\[\mc{L}=\{ (m,d\varphi(m))\in T^*X; m\in X\}.\]
A (compactly supported) \emph{semi-classical Lagrangian distribution} with Lagrangian $\mc{L}$ is a function on $X$ which can be written, for all $N\in \nn$, under the form 
\[u(m)=e^{i\frac{\varphi(m)}{h}}(\sum_{j=1}^{N-1}a_j(m)h^j+h^{N}r_{N,h}(m)) \] 
for some $a_j,r_N\in C_0^\infty(X)$ and $|\pl_m^\alpha r_{N,h}|=\mc{O}(1)$ for all $\alpha$.
We call $a_0$ the principal symbol of $u$.
Semi-classical pseudo-differential operators preserve the space of lagrangian distributions with Lagragian $\mc{L}$,
as is proved for example in  \cite[Lemma 4.1]{NonZw}, which we recall here
\begin{lemma}\label{nonzwo}
Let $T\in \Psi^{0,0}_c(X)$ be a compactly supported semi-classical operator with principal 
symbol $\alpha_0$ and $u$ be a compactly supported semi-classical Lagrangian distribution with Lagrangian $\mc{L}$ and principal symbol $a_0$.
Then $Tu$ is a compactly supported Lagrangian distribution with Lagrangian $\mc{L}$ and with principal symbol
$b_0(m):=\alpha_0(m,d\varphi(m))a_0(m)$. More precisely
\[Tu(m)=e^{i\frac{\varphi(m)}{h}}(b_0(m)+hr_{1,h}(m))
\]
with $r_{1,h}\in C_0^\infty(X)$ satisfying $||r_{1,h}||_{L^2}\leq C( ||u||_{L^2}+||a_0||_{C^{n+3}})$ for some $C>0$ depending only on semi norms of the local symbol of $T$ and on semi-norms of $\varphi$.
\end{lemma}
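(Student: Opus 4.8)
The final statement is Lemma \ref{nonzwo}, which asserts that a compactly supported semi-classical pseudo-differential operator $T\in\Psi^{0,0}_c(X)$ acting on a semi-classical Lagrangian distribution $u=e^{i\varphi/h}\sum_j a_j h^j$ produces again a Lagrangian distribution with the same phase $\varphi$, whose principal symbol is $\alpha_0(m,d\varphi(m))a_0(m)$, together with the quantitative remainder bound $\|r_{1,h}\|_{L^2}\le C(\|u\|_{L^2}+\|a_0\|_{C^{n+3}})$. The plan is to reduce to a purely local computation via the oscillatory integral representation of $T$ and then run a stationary phase expansion.

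First I would decompose $T=T_{\rm sm}+T_{\rm sg}$ as in the definition of $\Psi^{0,0}_c$. The smoothing part $T_{\rm sm}$ applied to $u$ contributes an $\mc{O}(h^\infty)$ term by the $L^2\to L^2$ decay hypothesis, so it is absorbed into the remainder. For the singular part, writing $T_{\rm sg}$ in a local chart and inserting $u(m')=e^{i\varphi(m')/h}(a_0(m')+\mc{O}(h))$, one obtains
\[
T_{\rm sg}u(m)=\frac{1}{(2\pi h)^d}\int e^{i\frac{(m-m')\cdot\xi+\varphi(m')}{h}}\,\alpha(h;m,\xi)\,a_0(m')\,dm'\,d\xi+\mc{O}(h).
\]
The phase $\Phi(m',\xi)=(m-m')\cdot\xi+\varphi(m')$ has a unique nondegenerate critical point at $\xi=d\varphi(m')$, $m'=m$ (the Hessian in $(m',\xi)$ being the standard symplectic pairing, hence of signature zero and determinant $\pm1$). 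Applying stationary phase in the $(m',\xi)$ variables, the leading term evaluates $\alpha_0$ at $(m,d\varphi(m))$ and $a_0$ at $m$, giving the claimed principal symbol $b_0(m)=\alpha_0(m,d\varphi(m))a_0(m)$, and the phase of the output is again $\varphi(m)$.

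The main obstacle — and the reason the lemma is cited rather than freshly proved here — is the \emph{quantitative} control of the remainder $r_{1,h}$, specifically the appearance of the norm $\|a_0\|_{C^{n+3}}$. To get this I would use the standard stationary phase remainder estimate, in which the error after extracting the leading term is bounded by a fixed number of derivatives of the amplitude $\alpha(h;m,\xi)a_0(m')$ evaluated near the critical point; the number of derivatives needed is governed by $d=n+1$ (the dimension, to beat the $h^{-d}$ prefactor and obtain $L^2$ boundedness via Schur or Calderón–Vaillancourt type estimates), which is precisely the source of the $C^{n+3}$ loss on $a_0$. The dependence of $C$ only on finitely many seminorms of the symbol of $T$ and of $\varphi$ follows because stationary phase only sees finitely many derivatives of the phase and amplitude. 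I would therefore carry out the steps as: (i) dispatch $T_{\rm sm}$ and the off-diagonal contribution of $T_{\rm sg}$ by non-stationary phase; (ii) localize near the critical point $\xi=d\varphi(m)$; (iii) apply stationary phase with explicit remainder; (iv) track the derivative count to recover the $\|a_0\|_{C^{n+3}}$ bound. For the full details I would simply invoke \cite[Lemma 4.1]{NonZw}, as the statement does.
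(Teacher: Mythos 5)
Your proposal is correct and follows essentially the argument the paper itself relies on: the paper gives no proof of this lemma, quoting it verbatim from \cite[Lemma 4.1]{NonZw}, and your sketch --- splitting $T=T_{\rm sm}+T_{\rm sg}$, dispatching the smoothing part and the non-critical region by $L^2$ bounds and non-stationary phase, then applying stationary phase in $(m',\xi)$ at the nondegenerate, signature-zero critical point $(m,d\varphi(m))$ where the phase value is $\varphi(m)$ --- is precisely the standard proof underlying that citation. Your explanation of the $C^{n+3}$ derivative count and of the $\|u\|_{L^2}$ term (the $\xi$-derivatives land on the symbol of $T$, only the $m'$-derivatives hit $a_0$, and the subprincipal part of $u$ is absorbed via Calder\'on--Vaillancourt) is heuristic but sound, and since you defer the quantitative bookkeeping to \cite{NonZw} exactly as the paper does, there is no gap.
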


We will use the semi-classical notations in this section: let $\la_h=\ndemi+\frac{i}{h}$, then for $\xi\in S^n$ fixed
\begin{equation}\label{phi}
E_0(\la_h; m,\xi)=E_0(\ndemi;m,\xi)e^{i\frac{\phi_\xi (m)}{h}} \, \textrm{ with }\phi_\xi(m):=\log\Big(\frac{1-|m|^2}{4|m-\xi|^2}\Big).
\end{equation}
As a function of $m$, $E_0(\la_h;m,\xi)$ is a semi-classical Lagrangian distribution associated to the Lagrangian 
submanifold  
\[\mc{L}_\xi:= \{ (m,d\phi_\xi(m))\in T_m^*\hh^{n+1}; m\in \hh^{n+1}\}\subset T^*\hh^{n+1}.\]

Let $\Gamma$ be a convex co-compact group of isometries of $\hh^{n+1}$. 
For a given $\xi\in \Omega_\Gamma$, let us define the (closure of) discrete union of Lagrangians 
\begin{equation}\label{mcL}  
\mc{L}^\Gamma_\xi:=\overline{\bigcup_{\gamma\in \Gamma} \mc{L}_{\gamma \xi}}.
\end{equation}
If $\pi_\Gamma:\hh^{n+1}\to \Gamma\backslash \hh^{n+1}=X$ is the projection to the quotient, then
$$\pi_\Gamma(\mc{L}^\Gamma_\xi)=\overline{\bigcup_{\gamma\in \Gamma}\pi_\Gamma(\mc{L}_{\gamma\xi})}$$ 
is the closure of a countable superposition of Lagrangian submanifolds of $T^*X$. Over a point $m\in X$, 
$\pi_\Gamma(\mc{L}^\Gamma)\cap S_m^*X$ corresponds to the (closure of) set of directions $v(m)\in S_m^*X$ 
such that the geodesic starting at $(m,v(m))$ tends to $\pi_{\Gamma}(\xi)\in \Gamma\backslash \Omega_\Gamma=\pl\bbar{X}$.
We point out that since $\Gamma$ is discrete, for all $\xi \in \Omega_\Gamma$, we have
$$\overline{\Gamma . \xi}=\left ( \bigcup_{\gamma \in \Gamma}\gamma \xi \right)\sqcup \Lambda_\Gamma, $$
see Beardon \cite{Bear}, Theorem 5.3.9. In other words, the set of accumulation points of the orbit $\Gamma . \xi$ is the full limit set 
$\Lambda_\Gamma$. Since $\mc{L}^\Gamma_\xi$ is locally diffeomorphic to $\hh^{n+1}\times \overline{\Gamma . \xi}$,
some basic facts of dimension theory show that the Hausdorff dimension of $\mc{L}^\Gamma_\xi$ is $n+1+\delta_\Gamma$. 
We show the following 
\begin{theorem}\label{phasesp}
Let $X=\Gamma\backslash \hh^{n+1}$ be a convex co-compact hyperbolic manifold, and assume 
$\delta_\Gamma<\ndemi$.
Let $A\in \Psi^{0}_c(X)$ be a compactly supported classical pseudo-differential operator of order 
$0$ with principal symbol $a\in C_0^\infty(X,T^*X)$. Then for $\xi\in \pl\bbar{X}=\Gamma\backslash \Omega_\Gamma$, we have as $h\to 0$ and for if $\la_h:=n/2+i/h$
\[\cjg AE(\la_h;\cdot,\xi),E(\la_h;\cdot,\xi)\cjd_{L^2(X)}=\int_{S^*X} a\,d\mu_{\xi}+\mc{O}(h^{\min(1,n-2\delta)})\]
where $d\mu_{\xi}$ is a measure supported on $\pi_{\Gamma}(\mc{L}^\Gamma_\xi)$ as defined in \eqref{mcL}.
More precisely, denoting also $a$ and $\xi$ for their lifts to respectively $S^*\hh^{n+1}$ and $\Omega_{\Gamma}$ through  $\pi_\Gamma$, and if $\mc{F}$ is a fundamental domain of $\Gamma$,
$\int_{S^*X}ad\mu_\xi$ can be written as a converging series
\begin{equation}\label{converging}
\int_{S^*X} a(m,v)d\mu_{\xi}(m,v):=\int_{\mc{F}} \sum_{\gamma\in \Gamma}a(m,d\phi_{\gamma \xi}(m))
E_0(n;m,\gamma\xi)|D\gamma(\xi)|^{n}dv(m).
\end{equation}
\end{theorem}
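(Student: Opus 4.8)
The plan is to reduce the microlocal pairing to the series computation in \eqref{converging} by exploiting three ingredients already available: the series representation of the Eisenstein function from Lemma \ref{quotient}, the action of semi-classical pseudo-differential operators on Lagrangian states from Lemma \ref{nonzwo}, and the off-diagonal non-stationary phase cancellation established in Lemma \ref{Idecays}. First I would write $E_X(\la_h;m,\xi)=\sum_{\gamma\in\Gamma}E_0(\la_h;\gamma m,\xi)\eta(\xi)^{\la_h}$ on the fundamental domain $\mc{F}$, and use the identity $E_0(\la_h;\gamma m,\xi)=E_0(\la_h;m,\gamma^{-1}\xi)|D\gamma^{-1}(\xi)|^{-\la_h}$ coming from Lemma \ref{Dgamma} to rewrite each term as a semi-classical Lagrangian distribution on $\mc{F}$ with phase $\phi_{\gamma^{-1}\xi}(m)$ and Lagrangian $\mc{L}_{\gamma^{-1}\xi}$. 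Thus $E_X(\la_h;\cdot,\xi)$ is, up to the explicit amplitudes, a superposition over $\gamma$ of Lagrangian states with distinct phases.

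Next I would apply $A\in\Psi^0_c(X)$ term by term. Since $A$ is compactly supported and $E_0(\ndemi;m,\xi)$ is smooth and bounded on the support, I can view $A$ as an $h$-pseudo-differential operator after cutting off the zero section (the momenta $d\phi_{\gamma^{-1}\xi}(m)$ all have unit length by \eqref{dphi}, so they stay away from $T^*_0X$ uniformly). Lemma \ref{nonzwo} then gives
\[
A\bigl(E_0(\la_h;\cdot,\gamma^{-1}\xi)\bigr)(m)=e^{i\phi_{\gamma^{-1}\xi}(m)/h}\bigl(a(m,d\phi_{\gamma^{-1}\xi}(m))E_0(\ndemi;m,\gamma^{-1}\xi)+hr_{1,h}^{\gamma}(m)\bigr),
\]
with remainders controlled uniformly in $\gamma$ in terms of symbol semi-norms and of $\|E_0(\ndemi;\cdot,\gamma^{-1}\xi)\|_{C^{n+3}}$, which are bounded by $Ce^{-\ndemi d(o,\gamma o)}$ on $\mc{F}$ exactly as in the proof of Lemma \ref{Idecays}. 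I would then form the $L^2$ pairing against $E_X(\la_h;\cdot,\xi)=\sum_{\gamma'}e^{i\phi_{\gamma'^{-1}\xi}(m)/h}E_0(\ndemi;m,\gamma'^{-1}\xi)$ and split into the diagonal $\gamma=\gamma'$ and off-diagonal $\gamma\neq\gamma'$ contributions.

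The diagonal terms give precisely the series in \eqref{converging}: the phases cancel, and the principal symbol contribution is $\int_{\mc{F}}\sum_\gamma a(m,d\phi_{\gamma^{-1}\xi}(m))E_0(n;m,\gamma^{-1}\xi)\,dv(m)$ after reindexing $\gamma\mapsto\gamma^{-1}$ and inserting the Jacobian factor $|D\gamma(\xi)|^n$; convergence for $\delta_\Gamma<\ndemi$ follows from the exponential decay $E_0(n;m,\gamma\xi)\le Ce^{-nd(o,\gamma o)}$ together with the counting bound \eqref{pater}, and this identifies $d\mu_\xi$ as a measure supported on $\pi_\Gamma(\mc{L}^\Gamma_\xi)$. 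For the off-diagonal terms the phase is $\phi_{\gamma^{-1}\xi}(m)-\phi_{\gamma'^{-1}\xi}(m)=B_\xi(\gamma m,\gamma' m)$, so each such integral is exactly an $I_{\gamma,\gamma'}(1/h)$-type oscillatory integral and Lemma \ref{Idecays} (now applied with the extra smooth factor $a$ and the $h$-pseudo-differential amplitude, which only changes the symbol semi-norms) yields the bound $\mc{O}(h^{n-2\delta_\Gamma})$ for their total sum.

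The main obstacle is the uniform control of the error terms: I must ensure that applying $A$ does not spoil the delicate $\gamma$-dependent decay that makes both the diagonal series converge and the off-diagonal sum summable. Concretely, the remainder $r_{1,h}^\gamma$ from Lemma \ref{nonzwo} must be estimated in $L^2(\mc{F})$ by a constant times $e^{-\ndemi d(o,\gamma o)}$, uniformly in $\gamma$, which requires checking that the $C^{n+3}$ norms of the amplitudes $E_0(\ndemi;\cdot,\gamma^{-1}\xi)$ and of the phases $\phi_{\gamma^{-1}\xi}$ are controlled on the compact support of $a$ as in \eqref{nablaB}; the contributions of these remainders then sum to $\mc{O}(h)$, giving the stated error exponent $\min(1,n-2\delta_\Gamma)$. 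The interchange of the infinite sum over $\gamma$ with the action of $A$ and with the $L^2$ pairing must be justified by this same uniform decay, and this bookkeeping — rather than any single hard estimate — is the delicate part.
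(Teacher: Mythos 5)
Your proposal is correct in substance and follows the same architecture as the paper's proof: expand $E_X(\la_h;\cdot,\xi)$ on a fundamental domain as a convergent superposition of semi-classical Lagrangian states $e^{i\phi_{\gamma\xi}/h}E_0(\ndemi;\cdot,\gamma\xi)|D\gamma(\xi)|^{\la_h}$, apply Lemma \ref{nonzwo} term by term with remainders uniform in $\gamma,\xi$ thanks to \eqref{derivephi}, identify the diagonal with \eqref{converging}, and kill the off-diagonal sum by Lemma \ref{Idecays}, observing (as the paper does) that the $\gamma$-dependent amplitude $a(m,d\phi_{\gamma\xi}(m))$ has derivatives bounded uniformly in $\gamma,\xi$, so the non-stationary phase argument goes through unchanged; this yields exactly the exponent $\min(1,n-2\delta_\Gamma)$. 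The one place you genuinely diverge is the reduction of the classical operator $A\in\Psi^0_c(X)$ to a semi-classical one: the paper inserts a cutoff $\psi_h$ microsupported in $\{||v|-1|\leq 1/2\}$ and proves $\chi'(1-\psi_h)\chi E_X(\la_h;\cdot,\xi)=\mc{O}_{L^2}(h)$ \emph{globally}, using the eigenfunction equation $P_hE_X=0$ with $P_h=h^2(\Delta_X-n^2/4)-1$, a commutator bound, and ellipticity of $P_h(1-\psi_h)$ off the characteristic set, and then invokes Lemma \ref{compos} to conclude $A\psi_h\chi'\in\Psi_c^{0,0}(X)$ with symbol $a\psi$. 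You instead justify the cutoff term by term, using that each Lagrangian $\mc{L}_{\gamma\xi}$ lies in the unit cosphere bundle by \eqref{dphi}, so that $(1-\psi_h)$ annihilates each Lagrangian state modulo $\mc{O}(h^\infty)$ with constants uniform in $\gamma$ by \eqref{derivephi}; this is more elementary (it never uses the PDE) and is available precisely because you have the series representation in hand, but it costs you one extra uniform-in-$\gamma$ non-stationary-phase estimate which you assert rather than carry out, whereas the paper's PDE argument is independent of the Lagrangian decomposition. Two small slips to fix in your write-up: the correct covariance identity is $E_0(\la;\gamma m,\xi)=E_0(\la;m,\gamma^{-1}\xi)\,|D\gamma^{-1}(\xi)|^{+\la}$ (equivalently $|D\gamma(\gamma^{-1}\xi)|^{-\la}$), not the exponent $-\la_h$ you wrote — with your sign the diagonal would produce the divergent factor $|D\gamma(\xi)|^{-n}$ after reindexing; and the convergence of the diagonal series (and of your remainder sums) rests on $|D\gamma(\xi)|^{n}\leq Ce^{-nd(o,\gamma o)}$ uniformly for $\xi\in\bbar{\mc{F}}\cap S^n$, via \eqref{formulebeardon}, since $E_0(n;m,\gamma\xi)$ and the $C^{n+3}$ norms of $E_0(\ndemi;\cdot,\gamma\xi)$ are merely bounded, not exponentially decaying, for $m$ in a compact set.
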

\begin{proof}
First we study $AE_X(\la_h;\cdot,\xi)$. Let $\chi,\chi'\in C_0^\infty(X)$ be functions such that $\chi A\chi=A$ and $\chi'\chi=\chi$, and let $\psi_h\in \Psi_c^{0,0}(X)$ be a semi-classical pseudo-differential operator  
with ${\rm WF}_h(\psi_h)\subset \{(m,v)\in T^*M; ||v|-1|\leq 1/2\}$ and 
with semi-classical principal symbol $\psi$ equal to $1$ on $ \{(m,v)\in T^*M; ||v|-1|\leq 1/4\}$. 
We first claim that 
$\chi'(1-\psi_h)\chi E_X(\la_h;\cdot,\xi)=\mc{O}(h)$ in $L^2$. Indeed,
let $P_h:=h^2(\Delta_X-n^2/4)-1$, then using that $P_hE_X(\la_h;\cdot,\xi)=0$, we deduce that
\[ P_h\chi'(1-\psi_h)\chi E_X(\la_h;\cdot,\xi)=[P_h,\chi'(1-\psi_h)\chi] E_X(\la_h;\cdot,\xi)\] 
which has $L^2$ norm $\mc{O}(h)$ since the commutator is in $\Psi^{0,-1}_c(X)$ (the semi-classical 
symbol of $\psi_h$ being compactly supported) and $||\chi E_X(\la_h,\cdot,\xi)||_{L^2}=\mc{O}(1)$. The claim
follows by the invertibility of the semi-classical principal symbol of $P_h(1-\psi_h)$ on the microsupport of 
$\chi E_X(\la_h;\cdot,\xi)$. 
Therefore 
\begin{equation}\label{AEX}
AE_X(\la_h;\cdot,\xi)=A\psi_h\chi E_X(\la_h;\cdot,\xi)+\mc{O}_{L^2}(h).
\end{equation}
By Lemma \ref{compos}, $A_h:=A\psi_h\chi'\in \Psi_c^{0,0}(X)$ is a semi-classical operator with 
semi-classical principal symbol $a(m,v)\psi(m,v)$.

Now we lift all objects to the covering $\hh^{n+1}$ and we can assume without loss of generality that 
 $A$ has kernel compactly supported in $\mc{F}\x\mc{F}$ where $\mc{F}$ is a fundamental domain of $\Gamma$.
Recall that $E_X(\la_h;m,\xi)$ can be expressed as a converging series 
\[\begin{split}
E_X(\la_h;m,\xi)&=\sum_{\gamma\in \Gamma} E_0(\ndemi;m,\gamma\xi)
|D\gamma^{-1}(\gamma\xi)|^{-\la_h}
e^{i\phi_{\gamma\xi}(m)/h}\\
&=\sum_{\gamma\in \Gamma} E_0(\ndemi;m,\gamma\xi)
|D\gamma(\xi)|^{\la_h}
e^{i\phi_{\gamma\xi}(m)/h}
\end{split}\]
For the second line, we have used that $\gamma$ is a M\"obius transform, so that  
$$|D\gamma^{-1}(\gamma\xi)|^{-1}=|D\gamma(\xi)|,$$ 
series being convergent when $\delta_\Gamma<n/2$ 
since $|D\gamma(\xi)|\leq Ce^{-d(o,\gamma o)}$ uniformly for $\xi\in \bbar{F}\cap S^n$, by \eqref{formulebeardon}. 
In particular, one has $||\chi E_X(\la_h;\cdot,\xi)||_{L^2}=\mc{O}(1)$, uniformly in $\xi\in \pl\bbar{X}$.

The action of $A_h$ on each term of the series, ie. $A_h(\chi e^{i\phi_{\gamma\xi}/h}E_0(\ndemi;\cdot,\gamma\xi))$, is described by  Lemma \ref{nonzwo}, $A_h(\chi e^{i\phi_{\gamma\xi}/h}E_0(\ndemi;\cdot,\gamma\xi))$ is a Lagrangian distribution on 
$\hh^{n+1}$ associated to the Lagrangian $\mc{L}_{\gamma\xi}$, and compactly supported in $\mc{F}$:
\begin{equation}\begin{split}\label{evol}
A_h(\chi e^{i\phi_{\gamma\xi}/h}E_0(\ndemi;\cdot,\gamma\xi))=e^{i\phi_{\gamma\xi}/h}\Big( &
\chi(m)E_0(\ndemi;m,\gamma\xi)a(m,d\phi_{\gamma\xi}(m))\psi(m,d\phi_{\gamma\xi}(m))\\
& +hr_h(m,\gamma\xi) \Big)
\end{split}\end{equation}
with $||r_h(\cdot,\gamma\xi)||_{L^2}=\mc{O}(1)$ bounded by a constant times finitely many semi-norms of 
$a$, $E_0(\ndemi,\cdot,\xi)$ and of the phase 
$\phi_{\gamma\xi}$. But it is easily seen that for all $\alpha\in \nn^{n+1}$ there exists $C_\alpha>0$ such that
for all $\gamma\in \Gamma$ and $\xi\in \bbar{\mc{F}}\cap S^n$, 
\begin{equation}\label{derivephi} 
\sup_{m\in\supp(\chi)}|\pl_m^\alpha \phi_{\gamma\xi}(m)| \leq C_\alpha,  \quad 
\sup_{m\in\supp(\chi)}|\pl_m^\alpha E_0(\ndemi; m,\gamma\xi)|\leq C_\alpha
\end{equation}
therefore $||r_h(\cdot,\gamma\xi)||_{L^2}$ is uniformly bounded with respect to $h,\gamma$ and $\xi$.
Notice, by \eqref{dphi} and since $\psi=1$ on $\{(m,v)\in T^*X; ||v|-1|<1/4\}$, we have $\psi(m,d\phi_{\gamma\xi}(m))=1$.
We thus deduce that there exists $C>0$ uniform in $h,\gamma,\xi$ such that 
\[\Big|\Big|e^{i\phi_{\gamma\xi}/h}\Big(
\chi E_0(\ndemi;\cdot,\gamma\xi)a(\cdot,d\phi_{\gamma\xi}(\cdot))+
hr_h(\cdot,\gamma\xi) \Big)\Big|\Big|_{L^2}\leq C .\]
 Consequently,  using again $\delta_{\Gamma}<n/2$ with $|D\gamma(\xi)|\leq Ce^{-d(o,\gamma o)}$,
\[ \sum_{\gamma\in \Gamma} \Big|\Big|e^{i\phi_{\gamma\xi}/h}|D\gamma(\xi)|^{\la_h}|\Big(
\chi E_0(\ndemi;\cdot,\gamma\xi)a(\cdot,d\phi_{\gamma\xi}(\cdot))+
hr_h(\cdot,\gamma\xi) \Big)\Big|\Big|_{L^2} <\infty \]
and we deduce that 
\[\begin{split}
A_h\chi E_X(\la_h;m,\xi)=&\sum_{\gamma\in \Gamma}A_h(\chi e^{i\phi_{\gamma\xi}/h}E_0(\ndemi;\cdot,\gamma\xi))(m)
|D\gamma(\xi)|^{\la_h}\\
=& \sum_{\gamma\in \Gamma}e^{i\frac{\phi_{\gamma\xi(m)}}{h}} |D\gamma(\xi)|^{\la_h}\Big(
E_0(\ndemi;m,\gamma\xi)a(m,d\phi_{\gamma\xi}(m))+hr_h(m,\gamma\xi) \Big).
\end{split}\]
Integrating this against $\bbar{E_X(\la_h,\cdot,\xi)}=E_X(\bbar{\la_h},\cdot,\xi)$, we have 
\begin{equation}\label{presqfini}\begin{gathered}
\cjg A_h\chi E_X(\la_h;m,\xi),E_X(\la_h,\cdot,\xi)\cjd\\
= \int_{\mc{F}} 
\sum_{\gamma\in\Gamma}|D\gamma(\xi)|^{n}E_0(n;m,\gamma\xi)a(m,d\phi_{\gamma\xi})dv(m)+\mc{O}(h)
\\
 +\int_{\mc{F}} \sum_{\gamma\not=\gamma'}e^{i\frac{B_{\xi}(\gamma m,\gamma' m)}{h}}|D\gamma(\xi)|^{\la_h}|D\gamma'(\xi)|^{\bbar{\la_h}}
\Big(E_0(1;m,\gamma\xi)E_0(1;m,\gamma'\xi)\Big)^\ndemi a(m,d\phi_{\gamma\xi}(m))dv(m)
\end{gathered}
\end{equation}
where $B_\xi(m,m')$ is the Buseman function of \eqref{buseman} and the $\mc{O}(h)$ term comes from the 
remainder $h\cjg\sum_{\gamma\in\Gamma}e^{i\phi_{\gamma\xi}/h}r_h(\cdot,\gamma\xi)|D\gamma(\xi)|^{\la_h},E_X(\la_h,\cdot,\xi)\cjd$
which is uniformly bounded in $\xi\in \pl\bbar{X}$ by the discussion above. Now it remains to apply 
Lemma \ref{Idecays} to deal with the off-diagonal term in the second line of \eqref{presqfini}, this gives that this term is 
a $\mc{O}(h^{n-2\delta_\Gamma})$. Notice that the amplitude $a(m,d\phi_{\gamma\xi}(m))$ 
here depends on $\gamma$, but this does not affect the argument of Lemma \ref{Idecays}, since the derivatives 
of the amplitude are uniformly bounded in $\gamma,\xi$ by \eqref{derivephi}. The proof of the Theorem 
is then finished by combining this with \eqref{AEX}.
\end{proof}

As a corollary, we also get 
\begin{cor}\label{inaverage}
Let $X=\Gamma\backslash \hh^{n+1}$ be a convex co-compact hyperbolic manifold with
$\delta_\Gamma<\ndemi$.  Let $E_X(\la;\cdot,\xi)$ and $dv_{\pl\bbar{X}}$ be the Eisenstein series and the 
Riemannian measure on $\pl\bbar{X}$ defined with a boundary defining function $x$ as before.
If $A\in \Psi^{0}_c(X)$ is a compactly supported classical pseudo-differential operator of order 
$0$ with principal symbol $a\in C_0^\infty(X,T^*X)$, then setting $\la_h=n/2+i/h$, we have as $h\to 0$
\[ \int_{\pl \bbar{X}}\cjg AE_X(\la_h;\cdot,\xi),E_X(\la_h;\cdot,\xi)\cjd_{L^2(X)} dv_{\pl\bbar{X}}(\xi)=\int_{S^*X} a(m,v) d\mu(m,v)+ \mc{O}(h^{\min(n-2\delta,1)})
\]
where $S^*X$ is the unit cotangent bundle and $d\mu$ the Liouville measure. 
\end{cor}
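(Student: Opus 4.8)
The plan is to deduce Corollary \ref{inaverage} from Theorem \ref{phasesp} simply by integrating the phase-space equidistribution statement over the boundary $\pl\bbar{X}$ against the measure $dv_{\pl\bbar{X}}(\xi)$. Since the remainder $\mc{O}(h^{\min(1,n-2\delta)})$ in Theorem \ref{phasesp} is uniform in $\xi\in\pl\bbar{X}$ (which is clear from the proof, as the bounds on the amplitudes and phases in \eqref{derivephi} are uniform in both $\gamma$ and $\xi\in\bbar{\mc{F}}\cap S^n$), and since $\pl\bbar{X}$ has finite total volume, integrating preserves the same order of remainder. Thus the only real content is to identify the integrated main term $\int_{\pl\bbar{X}}\int_{S^*X}a\,d\mu_\xi\,dv_{\pl\bbar{X}}(\xi)$ with the Liouville integral $\int_{S^*X}a\,d\mu$.

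First I would write the main term using the explicit converging series \eqref{converging} from Theorem \ref{phasesp}, namely
\[
\int_{\pl\bbar{X}}\int_{S^*X}a\,d\mu_\xi\,dv_{\pl\bbar{X}}(\xi)=\int_{\bbar{\mc{F}}\cap S^n}\Big(\int_{\mc{F}}\sum_{\gamma\in\Gamma}a(m,d\phi_{\gamma\xi}(m))E_0(n;m,\gamma\xi)|D\gamma(\xi)|^n\,dv(m)\Big)\eta(\xi)^{-n}d\xi,
\]
where I have used, exactly as in Corollary \ref{firstcor}, that $dv_{\pl\bbar{X}}$ pulls back to $\eta(\xi)^{-n}d\xi$ on $\bbar{\mc{F}}\cap S^n$, and where the factor $\eta(\xi)^{-n}$ is absorbed since the lift of the invariant density is scale-free. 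The key step is then the unfolding trick: by Lemma \ref{Dgamma} one has $|D\gamma(\xi)|^nE_0(n;m,\gamma\xi)=E_0(n;m,\gamma\xi)|D\gamma(\xi)|^n$ and the change of variable $\xi\mapsto\gamma\xi$ with Jacobian $|D\gamma(\xi)|^n$ lets me fold the sum over $\gamma$ against the integral over the single copy $\bbar{\mc{F}}\cap S^n$ into a single integral over $\Omega_\Gamma=\bigcup_\gamma\gamma(\bbar{\mc{F}}\cap S^n)$, which has full measure in $S^n$. This produces
\[
\int_{\mc{F}}\int_{S^n}a(m,d\phi_{\xi}(m))E_0(n;m,\xi)\,d\xi\,dv(m).
\]

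The final step is to recognize this as the Liouville integral. Using the diffeomorphism $\psi:(m,\xi)\mapsto(m,d\phi_\xi(m))$ from \eqref{identif} and the identity \eqref{psimu}, namely $\psi^*\mu=\nu=E_0(n;m,\xi)\,dv(m)\wedge d\xi$, the integrand $a(m,d\phi_\xi(m))E_0(n;m,\xi)\,d\xi\,dv(m)$ is exactly $\psi^*(a\,d\mu)$. Pushing forward through $\psi$ and passing to the quotient $S^*X=\Gamma\backslash(\hh^{n+1}\times S^n)$ (integrating $m$ over the fundamental domain $\mc{F}$ and $\xi$ over all of $S^n$ covers $S^*X$ exactly once) identifies the result with $\int_{S^*X}a\,d\mu$. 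I expect the main technical obstacle to be the careful bookkeeping of the unfolding, in particular justifying the interchange of the (absolutely convergent, by $\delta_\Gamma<n/2$) sum over $\gamma$ with the integrations and checking that the $\eta(\xi)$ and Jacobian factors cancel correctly so that the folding is measure-preserving onto $\Omega_\Gamma$; the rest is a direct application of \eqref{psimu} and the uniformity already established in Theorem \ref{phasesp}.
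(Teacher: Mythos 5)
Your proof is correct and follows essentially the same route as the paper's: integrate the series \eqref{converging} against $dv_{\pl\bbar{X}}$ lifted to $\bbar{\mc{F}}\cap S^n$, unfold the $\gamma$-sum via the change of variables $\xi\mapsto\gamma\xi$ (Jacobian $|D\gamma(\xi)|^n$) onto $\Omega_\Gamma$, which has full measure in $S^n$, and identify the resulting integral with $\int_{S^*X}a\,d\mu$ via \eqref{gammanu} and \eqref{psimu}. One cosmetic remark: your clause ``by Lemma \ref{Dgamma} one has $|D\gamma(\xi)|^nE_0(n;m,\gamma\xi)=E_0(n;m,\gamma\xi)|D\gamma(\xi)|^n$'' is a tautology --- the substance of that step is simply the Jacobian change of variables, which you then carry out correctly.
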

\begin{proof}
We integrate \eqref{converging} in the $\xi\in\pl\bbar{X}$ variable with respect to $dv_{\pl\bbar{X}}$ like in the proof 
of Corollary \ref{firstcor} by lifting to the covering: we have, modulo $\mc{O}(h^{\min(1,n-2\delta)})$.
\[\begin{gathered}
\int_{\pl \bbar{X}}\cjg AE_X(\la_h;\cdot,\xi),E_X(\la_h;\cdot,\xi)\cjd_{L^2(X)} dv_{\pl\bbar{X}}(\xi)=\\
\int_{\mc{F}} \sum_{\gamma\in \Gamma}\int_{\xi\in \bbar{\mc{F}}\cap S^n}a(m,d\phi_{\gamma \xi}(m))
E_0(n;m,\gamma\xi)|D\gamma(\xi)|^{n}dv(m)d\xi=\\
\int_{\mc{F}} \int_{\xi\in \Omega_{\Gamma}}a(m,d\phi_{\xi}(m))
E_0(n;m,\xi)dv(m)d\xi
\end{gathered}\]
and the last term is exactly $\int_{S^*X} a(m,v) d\mu(m,v)$ by using \eqref{gammanu} and \eqref{psimu}.
\end{proof}

\section{Gutzwiller type formula}
In this section, we shall give a more precise description of the equidistribution in space when the Eisenstein series 
are averaged over the boundary. Through the Stone identity \eqref{stone}, this reduces to a sort of Gutzwiller trace formula. We prove the following 
\begin{theorem}
Let $X=\Gamma\backslash \hh^{n+1}$ be a convex co-compact hyperbolic manifold and assume that the limit set of 
$\Gamma$ has Hausdorff dimension $\delta_\Gamma<n/2$.
Let  $a\in C_0^\infty(X)$ and $\mc{C}$ be the set of all closed geodesics, 
then for all $N\in \nn$, one has the expansion as $t\to \infty$,
\begin{equation}
\begin{gathered}
\int_{\pl \bbar{X}}\int_X a(m)|E(\ndemi+it;m,\xi)|^2dm\,d\xi_{\pl \bar{X}}={\rm vol}(S^n)
\int_X a \, dv\\
+ L(t)\sum_{\gamma\in \mc{C}}
e^{-(\ndemi+it)\ell(\gamma)}\sum_{k=0}^{N}\frac{H_{k}(\ell(\gamma))}
{|\det({\rm Id}-e^{-\ell(\gamma)}R_\gamma^{-1})|}t^{-k}
\int_{\gamma} P_ka\, d\mu +\\
+ L(-t)\sum_{\gamma\in \mc{C}}
e^{-(\ndemi-it)\ell(\gamma)}\sum_{k=0}^{N}\frac{H_{k}(\ell(\gamma))}{|\det({\rm Id}-e^{-\ell(\gamma)}R_\gamma^{-1})|}(-t)^{-k}
\int_{\gamma} P_ka\, d\mu + \mc{O}(t^{-N-n-1}).
\end{gathered}
\end{equation}
where $H_k(\ell(\gamma))$ is an explicit bounded function of $\ell(\gamma)$, $P_k$ are differential operators of order 
$2k$ with coefficients uniformly bounded in terms of $\gamma\in\mc{C}$, $d\mu$ is the Riemannian measure induced on $\gamma$, $R_\gamma\in {\rm SO}(n)$ is the holonomy along $\gamma$ 
and 
\[\begin{gathered}
L(t)=t^{-(n-1)/2}\frac{2^{\ndemi+3-2it}|\Gamma(it)|^2}{\Gamma(it+\demi)\Gamma(\ndemi-it)}=\mc{O}(t^{-n})\\
 P_0= 1, \quad  H_{0}(\ell(\gamma))=2^\ndemi\pi^{\frac{n-1}{2}}e^{i\frac{n-1}{4}\pi}
\end{gathered}\] 
\end{theorem}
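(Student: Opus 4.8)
The plan is to reduce the boundary average to a (weighted) trace of the resolvent via Stone's formula \eqref{stone}, and then to recognize the oscillating sum over closed geodesics as the output of a stationary phase analysis applied to the off-diagonal terms of the resolvent kernel on the universal cover. Concretely, taking the diagonal case of \eqref{stone} and integrating against $a$ gives
\[\int_{\pl\bbar{X}}\int_X a(m)|E_X(\ndemi+it;m,\xi)|^2\,dv(m)\,dv_{\pl\bbar{X}}(\xi)=\frac{2\pi}{|C(\ndemi+it)|^2}\,\frac{it}{\pi}\int_X a(m)\big(R_X(\ndemi+it;m,m)-R_X(\ndemi-it;m,m)\big)\,dv(m).\]
Lifting to $\hh^{n+1}$ and using the group expansion $R_X(\ndemi+it;m,m)=\sum_{\gamma\in\Gamma}R_{\hh^{n+1}}(\ndemi+it;\gamma m,m)$ from the proof of Lemma \ref{quotient}, I split the sum into the identity term and the hyperbolic terms. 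By homogeneity $R_{\hh^{n+1}}(\ndemi\pm it;m,m)$ is constant in $m$, so the identity term contributes the Plancherel density of $\hh^{n+1}$ against $\int_X a\,dv$; with the chosen normalization the Stone prefactor is tuned so that this yields exactly the main term ${\rm vol}(S^n)\int_X a\,dv$, in agreement with Corollary \ref{firstcor}. Since $\Gamma$ has only hyperbolic elements, every remaining term is attached to a hyperbolic $\gamma$, and I group by conjugacy classes; each class corresponds to a closed geodesic $\gamma\in\mc{C}$ whose axis is the minimum set of $m\mapsto d(\gamma m,m)$ with minimum value $\ell(\gamma)$, and the standard unfolding replaces $\int_{\mc{F}}\sum_{\gamma'\in[\gamma]}$ by an integral over the centralizer quotient $\langle\gamma\rangle\backslash\hh^{n+1}$.

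The next step is the large-$t$ asymptotics of $R_{\hh^{n+1}}(\ndemi+it;d)$. Writing the hypergeometric factor in \eqref{R0} through its integral representation and applying the saddle point method, I would show that the saddle $s_*=1/(1+\tanh(d/2))$ produces the total exponential $e^{-(\ndemi+it)d}$ exactly: the factors $\cosh^{-2(\ndemi+it)}(d/2)$ and the hypergeometric conspire so that the combined phase is $-(\ndemi+it)d$. The Gaussian fluctuation around the saddle, together with the $\Gamma$-quotient in front of \eqref{R0}, assembles into a classical symbol of order $(n-2)/2$ in $t$ multiplying $e^{-(\ndemi+it)d}$. Consequently each closed-geodesic term is an oscillatory integral over $\langle\gamma\rangle\backslash\hh^{n+1}$ with phase $-t\,d(\gamma m,m)$ and amplitude polyhomogeneous in $1/t$.

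I then carry out stationary phase with parameter, the parameter running along the geodesic. On the axis the gradient of $d(\gamma m,m)$ vanishes with critical value $\ell(\gamma)$, producing the factor $e^{-(\ndemi\pm it)\ell(\gamma)}$; the $n$ transverse directions are nondegenerate, and the transverse Hessian of the phase together with the Jacobian of the Fermi-coordinate volume element and the geometric part of the resolvent amplitude assemble into the Poincar\'e-type weight $|\det({\rm Id}-e^{-\ell(\gamma)}R_\gamma^{-1})|^{-1}$, with $R_\gamma\in{\rm SO}(n)$ the transverse holonomy. Integrating the leading amplitude along the critical manifold gives $\int_\gamma a\,d\mu$ (so $P_0=1$), while the higher orders of the stationary phase expansion produce the differential operators $P_k$ of order $2k$ and the $t^{-k}$ corrections; the term $R_{\hh^{n+1}}(\ndemi-it)$ yields the complex-conjugate sum with $L(-t)$ and $(-t)^{-k}$. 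Finally, collecting all $t$-dependent constants — from $|C(\ndemi+it)|^2$, from the $\Gamma$-prefactor in \eqref{R0}, from the half-integer powers of $t$ produced by the saddle-point evaluation of the hypergeometric function and by the $\Gamma$-factor asymptotics, and from the normalization of the transverse Gaussian integral — reproduces precisely the prefactor $L(\pm t)=\mc{O}(t^{-n})$ and the Maslov phase $e^{i(n-1)\pi/4}$ appearing in $H_0$.

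The main obstacle is uniformity and summability. The stationary phase expansion must hold with remainders, and with the coefficients of the $P_k$ bounded uniformly over all closed geodesics, including arbitrarily long and multiply traversed ones; this rests on the homogeneity and bounded geometry of $\hh^{n+1}$ and on the uniform lower bound $|\det({\rm Id}-e^{-\ell(\gamma)}R_\gamma^{-1})|\geq c>0$. The resulting series over $\mc{C}$ must then converge: because $a$ is compactly supported, $|\int_\gamma P_k a\,d\mu|$ is controlled by the length of $\gamma\cap\supp(a)$, and combined with the counting estimate \eqref{pater} (equivalently the exponential growth $\#\{\gamma\in\mc{C}:\ell(\gamma)\leq L\}=\mc{O}(e^{\delta_\Gamma L})$) the weighted sum $\sum_{\gamma\in\mc{C}}e^{-\ndemi\ell(\gamma)}|\int_\gamma P_k a\,d\mu|$ converges \emph{exactly} under the hypothesis $\delta_\Gamma<n/2$. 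The same growth versus decay balance is what forces the summed remainder to be $\mc{O}(t^{-N-n-1})$.
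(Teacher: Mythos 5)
Your proposal is correct in substance and runs on the same analytic skeleton as the paper: Stone's formula \eqref{stone} converts the boundary average into $\frac{4\pi t}{|C(\ndemi+it)|^2}\sum_{\gamma\in\Gamma}\int_{\mc{F}}a(m)\,d\Pi_0(t;\gamma m,m)\,dv(m)$, the identity term yields ${\rm vol}(S^n)\int_X a\,dv$ exactly as you say, your saddle point $s_*=1/(1+\tanh(d/2))=(1+e^{-d})/2$ for the hypergeometric integral is precisely the paper's critical point $u_\sigma=\frac{1}{2\sigma}(1+\sigma-\sqrt{1-\sigma^2})$ of Lemma \ref{expansion}, and your transverse stationary phase along the axis, with Hessian as in \eqref{hessien}, produces the holonomy weight $|\det({\rm Id}-e^{-\ell(\gamma)}R_\gamma^{-1})|^{-1}$ just as in Lemma \ref{A_1}. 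The one genuine divergence is organizational: you unfold each conjugacy class onto the centralizer quotient $\langle\gamma_0\rangle\backslash\hh^{n+1}$, whereas the paper keeps every $\alpha\in[\gamma]$ as a separate integral over the fundamental domain and partitions $[\gamma]=A_1(\gamma)\cup A_2(\gamma)$ according to whether the axis of $\alpha$ comes within distance $1$ of $\mc{F}$, proving uniform non-stationary-phase bounds for the far elements (Lemma \ref{A_2}) and a stationary phase with constants tracked uniformly in $\alpha,[\gamma]$ via H\"ormander's theorem for the near ones. Be aware that unfolding does not buy you a compactly supported amplitude: the lift of $a$ to the cylinder consists of infinitely many translates of $\supp(a)$ accumulating at the ends, so away from the axis you still need exactly the paper's $A_2$-type integration by parts; moreover, at distance $\rho$ from the axis one has $e^{-\frac{n}{2}d(\gamma m,m)}\approx e^{-\frac{n}{2}\ell(\gamma)}e^{-n\rho}$ against shell volume growth $\sim e^{n\rho}$, so the far region is borderline and is controlled not by the closed-geodesic counting you invoke but by the sparseness of the translates, i.e.\ by convergence of the Poincar\'e series $\sum_{\alpha}e^{-\frac{n}{2}d(m,\alpha m)}$, guaranteed by \eqref{delta} since $\delta_\Gamma<n/2$ --- this is how the paper sums all its remainders, while your bound via \eqref{pater} on $\sharp\{\gamma\in\mc{C}:\ell(\gamma)\leq L\}$ suffices only for the main oscillating sums over $\mc{C}$. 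With that repair your argument is equivalent to the paper's; the paper's per-element bookkeeping trades the classical elegance of unfolding for estimates that are manifestly uniform in the group element and summable over all of $\Gamma$.
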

\begin{proof}
We write $\mu_t(a)$ for the left hand side of \eqref{qer}. We will assume, without losing generality, 
that the support of $a$ is small enough so that there exists a fundamental domain 
$\mc{F}\subset \hh^{n+1}$ with $\pl\mc{F}\cap {\rm supp}(a)=\emptyset$.
 We have by \eqref{stone}
\[\begin{split}
\mu_t(a)=&\int_{X}\int_{\pl X} a(m)E_X(\ndemi+it;m,y)E_X(\ndemi-it;m,y) dv(m) dv_{\pl\bbar{X}}(y)\\
&= \frac{4\pi t}{|C(\ndemi+it)|^2} \int_X a(m) d\Pi_X(t;m,m) dv(m)\\
\mu_t(a) &=\frac{4\pi t}{|C(\ndemi+it)|^2} \sum_{\gamma\in \Gamma} \int_{\mc{F}} a(m)d\Pi_0(t;\gamma m,m) dv(m).
\end{split} \]
where $d\Pi_X$ is the spectral measure of $\Delta$ on $X$ and $d\Pi_0$ the spectral measure of $\Delta$ on $\hh^{n+1}$.
For the $\gamma={\rm Id}$ term, the spectral measure on $\hh^{n+1}$ restricted on the diagonal is a constant function  given by
\[ \alpha(t):=4\pi t  \, d\Pi_0(t;m,m)=\pi^{-\ndemi}\frac{\Gamma(n/2)}{\Gamma(n)}\frac{\Gamma(\ndemi+it)\Gamma(\ndemi-it)}{\Gamma(it)\Gamma(-it)}=|C(\ndemi+it)|^2{\rm vol}(S^n).\]
The terms with $\gamma\not={\rm Id}$ are actually lower order as $t\to \infty$: we claim that 
\begin{equation}\label{decay}
\sum_{\gamma\in \Gamma\setminus{\rm Id}} \int_{\mc{F}} a(m)td\Pi_0(t;\gamma m,m) dv(m)=\mc{O}(1)
\end{equation}
and we will actually obtain a much better description.
%We use the standard argument of decomposition of the group $\Gamma$ into its conjugacy classes: let 
%$\mc{P}$ be the set of primitive hyperbolic elements of the group  and let $\Sigma_\gamma$ 
%be the cyclic group generated by any element $\gamma\in \mc{P}$. Then, viewing $a$ as a $\Gamma$-invariant function,
%we obtain, by the usual Selberg trace argument,
%\begin{equation}\label{selberg}
%\sum_{\gamma\in \Gamma\setminus {\rm Id}} \int_{F} a(m)d\Pi_0(t;\gamma m,m) dv(m)%\sum_{\gamma\in \mc{P}}\sum_{k=1}^\infty \int_{\Sigma_\gamma\backslash\hh^{n+1}}a(m)d\Pi_0(t;\gamma^k m,m) dv(m).
%\end{equation}
%Notice that, although $a(m)$ had compact support in the quotient $X$, it does not have compact support on 
%$\Sigma_\gamma\backslash\hh^{n+1}$. 
%Let $\chi_1,\chi_2$ be two smooth functions on $\Sigma_\gamma\backslash \hh^{n+1}$ so that 
%$\chi_1+\chi_2=1$ and that the support of $\chi_j$ can be strictly included in a fundamental domain $F^j_\gamma$ of
%$\Sigma_\gamma$.
%The integral for each element $\gamma\in\mc{P}$ and $k\in\nn$ on the right of \eqref{selberg} can be decomposed as 
%\[I_{k,\gamma}%\int_{F^1_\gamma}\chi_1(m)a(m)d\Pi_0(t;\gamma^k m,m) dv(m)+
%\int_{F^2_\gamma}\chi_2(m)a(m)d\Pi_0(t;\gamma^k m,m) dv(m)\]
%Let us consider the first term with $\chi_1$, the other one being similar. Let $\chi_0$ be a compactly supported function 
%in $F^1_\gamma$, with support contained in a tubular neighbourhood of hyperbolic radius $\eps$ of the axis of 
%$\gamma$.

From the explicit formula of the resolvent for the Laplacian on hyperbolic space in \eqref{R0}, 
we have that 
\[4\pi td\Pi_0(t;\gamma m,m)=-4t {\rm Im}(F_{t}(\sigma(d(m,\gamma m))))\] 
where $\sigma(r)=1/\cosh(r)$, $d(\cdot,\cdot)$ denotes the hyperbolic distance and 
\[\begin{gathered}
F_t(\sigma):=M(t)\sigma^{\ndemi+it}\int_{0}^1 (u(1-u))^{it-\demi}(\sigma(1-2u)+1)^{-\ndemi-it}du\\
M(t):=\pi^{-\frac{n+1}{2}}2^{1-\ndemi-it}\frac{\Gamma(\ndemi+it)}{\Gamma(\demi+it)}=\mc{O}(t^{\frac{n-1}{2}})
\end{gathered}\]
\begin{lemma}\label{expansion}
Let $\eps>0$, and $\sigma(r)=\cosh(r)^{-1}$, then there exist some smooth functions $c_k\in C^\infty([0,1-\eps))$ such that for all $N$, we have
as $t\to \infty$ 
\[
F_t(\sigma(r))= M(t)2^{-it}e^{-itr}\sigma(r)^{\ndemi}\Big(\sum_{k=0}^N t^{-\demi-k}c_k(\sigma(r)) + t^{-\demi-N-1}R_N(\sigma(r),t)\Big).
\]
with the remainder $R_N$ satisfying $|\pl_\sigma^\ell R_N(\sigma,t)|\leq C_\ell$, for all $\ell\in\nn_0$, 
$\sigma\in[0,1-\eps)$, $t>t_0>0$, and $c_0(\sigma)=\pi^{\demi}e^{-i\pi/4}(1-\sigma^2)^{-n/4}$. 
\end{lemma}
\begin{proof}
Assuming that $\sigma<1-\eps$, then in the integral defining $F_t(u)$, one can use the stationary phase method
(e.g. \cite[Th 3.10]{EvZw} ) to get an asymptotic expansion in $t\to \infty$:
the phase has a unique nondegenerate critical point in $(0,1)$ given by $u_\sigma=\frac{1}{2\sigma}(1+\sigma-\sqrt{1-\sigma^2})$, the Hessian is a smooth function of $\sigma\in [0,1-\eps)$ which is bounded from 
below by a positive constant depending only on $\eps>0$.
Using a cutoff function near the point $u_\sigma$ 
and using integration by parts for the non-stationary part, one deduces as $t\to \infty$
\[\sigma^{\ndemi+it}\int_{0}^1 (u(1-u))^{it-\demi}(\sigma(1-2u)+1)^{-\ndemi-it}du\sim \sigma^{\ndemi}\sum_{k=0}^\infty t^{-\demi-k}c_k(\sigma) 
\Big(\frac{1-\sqrt{1-\sigma^2}}{2\sigma}\Big)^{it}\]
with $c_k(\sigma)$ some smooth function of $\sigma\in[0,1-\eps]$ and a straightforward computation gives
\begin{equation}\label{c_0}
c_0(\sigma)= \pi^{\demi}e^{-i\pi/4}(1-\sigma^2)^{-n/4}.
\end{equation} 
Since 
$\frac{1-\sqrt{1-\sigma(r)^2}}{2\sigma(r)}=\demi e^{-r}$, we have an expansion, for all $N\in\nn$, as $t\to \infty$
\begin{equation}\label{stat}
F_t(\sigma(r))= M(t)2^{-it}e^{-itr}\sigma(r)^{\ndemi}\Big(\sum_{k=0}^N t^{-\demi-k}c_k(\sigma(r)) + \mc{O}_\sigma(t^{-\demi-N-1})\Big).
\end{equation}
with a remainder uniform in $\sigma<1-\eps$, as well as  its derivatives with respect to $\sigma$.
\end{proof}
Notice that here $d(m,\gamma m)> C_0$ 
for some $C_0>0$ uniform in $m$ (by the convex co-compactness of $\Gamma$), then $\sigma(d(m,\gamma m))<1-\eps$ for some uniform $\eps$ and thus $F_t(\sigma(m,\gamma m))$ is smooth in $m\in K$ for any fixed compact set $K\subset \hh^{n+1}$.

The displacement function $r(m,\gamma m)$, or more precisely $\sinh(\demi r(m,\gamma m))$, is easily 
computed in dimension $n+1=2$, it is given by $\sinh(\demi r(m,\gamma m))=\cosh(d(m,{\rm axis}(\gamma)))\sinh(\demi\ell(\gamma))$. In higher dimension it is more complicated. 
Consider the half plane model $\{x>0,y\in \rr^n\}$ of $\hh^{n+1}$. We can assume that $\infty$ is not in the limit set of $\Gamma$,
so that a fundamental domain can be taken in a ball $x^2+|y|^2\leq R$ for some $R$.
For all $\gamma\in \Gamma$, there is an isometry 
$h_\gamma$ such that
\begin{equation}\label{tgamma}
h_\gamma\circ\gamma\circ h_\gamma^{-1}(x,y)=e^{\ell(\gamma)}(x,R_\gamma y)
\end{equation}
where $\ell(\gamma)>0$ is the translation length of $\gamma$
and $R_\gamma\in {\rm SO}(n)$ the holonomy associated to $\gamma$.
Since $\sigma(d(m,\gamma m))=\sigma(d(h_\gamma m,h_\gamma\circ\gamma\circ h_{\gamma}^{-1}h_\gamma m))$, one has  
\begin{equation}\label{sigmad}
\sigma(d(h_\gamma^{-1}m,\gamma \circ h_\gamma^{-1}m))=e^{-\ell(\gamma)}\frac{2x^2}{x^2(1+e^{-2\ell(\gamma)})+|(e^{-\ell(\gamma)}{\rm Id}-R_\gamma) y|^2}\end{equation}
where $m=(x,y)$. 
A consequence of this and Lemma \ref{expansion} is 

\begin{cor}
With $h_\gamma$ defined by \eqref{tgamma}, we have  as $t\to\infty$
\begin{equation}\label{integrale}
\begin{split}
\int_{\mc{F}} a(m)F_{t}(\sigma(d(m,\gamma m))) dv(m)
=&\frac{M(t)}{2^{it}}\sum_{k=0}^N t^{-k-\demi}\int_{h_\gamma(F)}e^{-it r_\gamma(m)}b_k(\sigma_\gamma(m))
a(h^{-1}_\gamma(m))\frac{dm}{x^{n+1}}\\
&+\mc{O}(\max_{m\in\supp(a)}e^{-\ndemi d(m,\alpha m)}t^{-N+n-2})
\end{split}
\end{equation}
where 
$r_\gamma(m)=:d(h_\gamma^{-1}m, \gamma  \circ h_\gamma^{-1}m)$, $
\sigma_\gamma(m):=\sigma(d(h_\gamma^{-1}m, \gamma \circ  h_\gamma^{-1}m))$ and 
$b_k(\sigma):=\sigma^{\ndemi}c_k(\sigma)$.
\end{cor}
We now want to use stationary phase in the space variable $m\in\mc{F}$ in the integral \eqref{integrale}.
Let us define the following function
$\Phi_\gamma(m):=\log(\sigma(d(h_\gamma^{-1}m, \gamma \circ  h_\gamma^{-1}m))$, so that 
\begin{equation}\label{drgamma} 
dr_\gamma(m)=\frac{d\Phi_\gamma(m)}{\tanh(r_\gamma(m))}.
\end{equation}
We first calculate the critical points of the phase $r_\gamma(m)$, which in turn are the critical points of the function
$\Phi_\gamma(m)$ in the $m=(x,y)$ coordinates of $\hh^{n+1}$
\begin{equation}\label{dPhi}
\begin{split}
d\Phi_\gamma(x,y)= & \Big(\frac{2|(e^{-\ell(\gamma)}{\rm Id}-R_\gamma)y|^2}{x^2(1+e^{-2\ell(\gamma)})+|(e^{-\ell(\gamma)}{\rm Id}-R_\gamma)y|^2}\Big)\frac{dx}{x}\\
&-x\Big( \frac{2(e^{-\ell(\gamma)}{\rm Id}-R_\gamma)^T(e^{-\ell(\gamma)}{\rm Id}-R_\gamma)y}{x^2(1+e^{-2\ell(\gamma)})+|(e^{-\ell(\gamma)}{\rm Id}-R_\gamma)y|^2}\Big).\frac{dy}{x}
\end{split}.\end{equation}
Therefore the critical points are located on the line $L=\{y=0\}$ which is  the image of the axis of $\gamma$ by $h_\gamma$.
% and this implies that there is $C>0$  such that for all $m=(x,y)\in\hh^{n+1}, \gamma\in\Gamma$
%\begin{equation}\label{normdphase}
%|d\Phi_\gamma(m)|^2_{g}\geq C\frac{(|y|^4+x^2|y|^2)}{x^2+|y|^2} \geq C\tanh^2( d(m,L)/2)
%\end{equation}
%where $g$ is the hyperbolic metric. 
For each slice $x={\rm cst}$,  we will integrate 
in the $y$ variable in \eqref{integrale} by using the stationary phase, we then first
need to check that $\det({\rm Hess}(\Phi_\gamma(x,0)))\not= 0$ for all $x$, where the Hessian is with respect 
to the $y\in\rr^n$ variable. The Hessian (in y variable) at $y=0$ is given by 
\[{\rm Hess}(\Phi_\gamma(x,0))=-2\frac{(e^{-\ell(\gamma)}{\rm Id}-R_\gamma)^T(e^{-\ell(\gamma)}{\rm Id}-R_\gamma)}{x^2(1+e^{-2\ell(\gamma)})}\]
and thus, using that $r_\gamma(x,0)=\ell(\gamma)$,
\begin{equation}\label{hessien}
{\rm Hess}(-r_\gamma(x,0))=\frac{2}{x^2(1-e^{-2\ell(\gamma)})}(e^{-\ell(\gamma)}{\rm Id}-R_\gamma)^T(e^{-\ell(\gamma)}{\rm Id}-R_\gamma).
\end{equation} 
with $|\det(e^{-\ell(\gamma)}{\rm Id}-R_\gamma)|^2=|\det({\rm Id}-e^{-\ell(\gamma)}R_\gamma^{-1})|^2>0$.

Therefore the phase is non-stationary 
if the axis of $\gamma$ does not intersect the support of $a(m)$, contained in the fundamental domain $\mc{F}$, and the integral is then 
a $\mc{O}(t^{-\infty})$ in this case (constants are depending on $\gamma$).  We shall split the sum over $\gamma\in \Gamma$ into 
conjugacy classes $[\gamma]$ in the group, as is usual in Selberg type analysis. 
For each primitive element $\gamma$, the conjugacy 
class of $\gamma$ corresponds to a primitive geodesic on $X$, which is given by 
\[ \pi_\Gamma\Big(\bigcup_{\beta \gamma \beta^{-1}\in [\gamma]}\beta({\rm axis}_\gamma)\cap \mc{F}\Big) \]
where $\pi_\Gamma: \hh^{n+1}\to \Gamma\backslash \hh^{n+1}$ is the natural projection, $[\gamma]$ denotes the conjugacy class of 
$\gamma$ and ${\rm axis}_\gamma$ is the axis of  $\gamma$. Moreover, there are only finitely many geodesics $\beta({\rm axis}_\gamma)$ that 
intersect $\mc{F}$, we then partition $[\gamma]$ into $A_1(\gamma)\cup A_2(\gamma)$ so that
 $\alpha=\beta \gamma \beta^{-1}\in A_2(\gamma)$ iff $d(\beta({\rm axis}_\gamma),\mc{F})>1$. If $\gamma$ is not primitive, one can do the same thing with $\gamma_0^k$ for some primitive $\gamma_0$. 

We first show the 
\begin{lemma}\label{A_2}
Let $\gamma\in \Gamma$ be a primitive element, then that for all $N>0$ there is $C_N$ depending only on $N$ such that for all $\alpha\in A_2(\gamma)$
\begin{equation}\label{A2}
\Big|\int_\mc{F} a(m)F_{t}(\sigma(d(m,\alpha m))) dv(m)\Big|\leq 
C_Nt^{-N+(n-1)/2}\max_{m\in \supp(a)} e^{-\ndemi d(m,\alpha m)}.
\end{equation}
\end{lemma} 
 \begin{proof}
For $\alpha\in A_2(\gamma)$, we then use non-stationary phase to evaluate \eqref{integrale}, i.e. integration by parts: 
 \begin{equation}\label{intbyparts}
 \begin{gathered}
\int_{h_\alpha(\mc{F})}e^{-it r_\alpha(m)}b_k(\sigma_\alpha(m))
a(h^{-1}_\alpha(m))\frac{dxdy}{x^{n+1}}\\
=-\frac{1}{it}\int_{h_\alpha (\mc{F})} \frac{\pl_x(e^{-itr_{\alpha}(m)})}
{\pl_x\Phi_\alpha(m)}\tanh(r_\alpha(m))b_k(\sigma_\alpha(m))
a(h^{-1}_\alpha (m))\frac{dxdy}{x^{n+1}}\\
=\frac{1}{it}\int_{h_\alpha (\mc{F})} e^{-itr_{\alpha}(m)}(x\pl_x-n)\Big(
\frac{\tanh(r_\alpha(m))b_k(\sigma_\alpha(m))a(h^{-1}_\alpha m)}{{x\pl_x\Phi_\alpha(m)}}\Big)
\frac{dxdy}{x^{n+1}}\\
=\frac{1}{(it)^N}\int_{h_\alpha(\mc{F})} e^{-itr_{\alpha}(m)}\Big((x\pl_x-n)\frac{\tanh(r_\alpha(m))}
{x\pl_x\Phi_\alpha}\Big)^N\Big(
b_k(\sigma_\alpha(m))a(h^{-1}_\alpha m)\Big)
\frac{dxdy}{x^{n+1}}
 \end{gathered}
 \end{equation}
where $N\in\nn$ will be chosen later. 
To estimate the terms in the last integral,
we need an expression for $h_\alpha$: if $p^1_{\alpha},p_{\alpha}^2\in \rr^n$ are the two fixed points of $\alpha$, define $p_\alpha:=p^2_{\alpha}-p^1_{\alpha}$, then $h_\alpha$ will be the composition of 
the translation $t_{\alpha}: m\to m-p^1_{\alpha}$ with  the reflection in the sphere 
$\{m; |m-p_\alpha|=|p_\alpha|\}$ followed by the dilation $m\to |p_\alpha|^{-1}m$ 
\begin{equation}\label{galpha}
g_\alpha: m\mapsto \frac{p_\alpha}{|p_\alpha|}+|p_\alpha|\frac{(m-p_\alpha)}{|m-p_\alpha|^2} ,\quad p_\alpha= p^2_{\alpha}-p^1_{\alpha}\end{equation}
and this defines $h_\alpha:=g_\alpha\circ t_{\alpha}$. This map is an orientation reversing isometry of $\hh^{n+1}$ which maps $p^1_\alpha$ to $0$ and $p^2_\alpha$ to $\infty$, it conjugates $\alpha$ to a model form
\eqref{tgamma}.
 If $m=(x,y)\in \rr^+\x\rr^n$ are coordinates  on $\hh^{n+1}$, then 
 \begin{equation}\label{xtm}
 x(h_\alpha(m))=\frac{|p_\alpha| x}{|m-p^2_\alpha|^2}, \quad y(h_\alpha(m))=\frac{p_\alpha}{|p_\alpha|}+|p_\alpha|\frac{(y-p_\alpha^2)}{|m-p_\alpha^2|^2}.
 \end{equation} 
%consider the half-plane $V\simeq \hh^2 \subset \hh^{n+1}$ containing $0,p_\alpha:=p^2_{\alpha}-p^1_{\alpha}$ and $m=(0_{\rr^n},1)$, let 
%$h_\alpha$ to be the composition of $h_{\alpha}$ with the isometry $h_\alpha$ of $\hh^{n+1}$ 
%defined on $V$  by $h_\alpha: z_0\to z_0/(|p_\alpha|-z_0)$ where $V$ is identified with $\{{\rm Im}(z_0)>0 ,z_0\in\cc\}$, and it can be
%extended using a Poincar\'e type extension, see the proof of \cite[Lemma 5.2]{GMP}.   
%This extension $h_\alpha$ satisfies (with $m=(x,y)$ in the half-space)
%\[x(h_\alpha(m))=|p_\alpha|x/|m-|p_\alpha||^2,\quad  \textrm{ and }\,  
%|h_\alpha(z_0,y_1,\dots,y_{n-1})|=|h_\alpha(z_0)|\] 
%if $(z_0,y_1,\dots,y_{n-1})\in V\x \rr^{n-1}$ are coordinates induced by 
%an orthogonal complement of $V$ in $\rr^{n+1}$. 
In particular, one has 
$ \max_{m\in \supp(a)} |h_\alpha(m)|\leq R$ for some $R$ depending only on $\supp(a)$. 
We notice that there exists $C>0$ uniform in $\gamma,\alpha$ such that for all $\gamma,\alpha$ 
\begin{equation}\label{miny}
\min \{|y|; m=(x,y)\in h_\alpha (\supp(a)), \alpha\in A_2(\gamma) \} \geq C.
\end{equation}
Indeed, by definition of $A_2(\gamma)$ and the fact that $h_\alpha$ is an isometry, 
the hyperbolic distance between $h_\alpha (\supp(a))$ and the line $y=0$ is bounded below by $1$, but 
$ C_0>x(h_\alpha(m))>C_1|p_\alpha|$ for some $C_0,C_1>0$ depending only 
on the support of $\supp(a)$. Then for $\eps>0$ small and $\alpha$ such that $|p_\alpha|\geq \eps>0$, there is a constant $C$ depending on $\eps$ such that \eqref{miny} holds for those $\alpha$. 
For the remaining $\alpha$ for which $|p_\alpha|\leq\eps$, it is easily seen from the expression \eqref{galpha}
that 
$|h_\alpha(m)|=1+O(\eps)$ for $m\in\supp(a)$ and therefore if $\eps$ is small enough $|y(h_\alpha (m))|>1/2$.

We now want to bound the terms in the right hand side of \eqref{intbyparts}. Using that $|m|$ 
is uniformly (with respect to $\alpha$) bounded on $h_\alpha (\supp(a))$ and using \eqref{dPhi}, \eqref{miny}, we deduce that there exists $C_N>0$ depending only on $N$ and $C>0$  such that for all $[\gamma]$, all $\alpha \in [\gamma]$, and  all $j\leq N$
\[\begin{gathered}
 \min_{m\in h_\alpha(\supp(a))} |(x\pl_x)\Phi_\alpha(m)|\geq C, \,\,\, 
  \sup_{m\in h_\alpha (\supp(a))} |Y_1\dots Y_j \Phi_\alpha(m)| \leq C_N,\\ 
  \sup_{m\in h_\alpha (\supp(a))} |Y_1\dots Y_j\sigma_\alpha(m)^\ndemi|\leq 
 C_N \sup_{\supp(a)} \sigma^\ndemi(d(m,\alpha m))\\
  \sup_{m\in h_\alpha (\supp(a))} |Y_1\dots Y_j\sigma_\alpha(m)| \leq C_N\sup_{\supp(a)} \sigma(d(m,\alpha m))\end{gathered}\] 
if $Y_i\in \{x\pl_x, x\pl_y\}$ (recall that $\sigma_\alpha=e^{\Phi_\alpha}$) Since $g_\alpha^{-1}(m)=|p_\alpha|g_\alpha(|p_\alpha|m)$  and
\[dg_\alpha(m)=|p_\alpha|\left( \frac{{\rm Id}}{|m-p_\alpha|^2}-2\frac{(m-p_\alpha)\cjg m-p_\alpha, \cdot \cjd}{|m-p_\alpha|^4} \right)\]
then we have 
\[ dg_\alpha^{-1}(g_\alpha (m))= |p_\alpha|^{-1}( |m-p_\alpha|^2{\rm Id}-2(m-p_\alpha)\cjg m-p_\alpha, \cdot \cjd )\]
and combining with  \eqref{xtm} and using $dh_\alpha(m)=dg_\alpha(m-p_\alpha^1)$
\begin{equation}\label{pullback} 
h_\alpha^*(xY)(m)= Y-2\frac{(m-p^2_\alpha)}{|m-p^2_\alpha|}\left\cjg \frac{m-p^2_\alpha}{|m-p^2_\alpha|^2}, Y \right\cjd   ,\quad Y\in \{\pl_x,\pl_y\}.\end{equation}
Combining with \eqref{integrale}, this shows that there is $C_N$ depending only on $N$ such that 
for all $\alpha\in A_2(\gamma)$
\[ \begin{gathered}
\Big|\int_{\supp(a)} a(m)F_{t}(\sigma(d(m,\alpha m))) dv(m)\Big| \\ 
\leq 
C_Nt^{-N+(n-1)/2} \int_{\supp(a)}\sigma(d(m,\alpha m))^{\ndemi}\max_{j\leq N} |(h_\alpha^* (x\pl_x))^N a(m)|dv(m).
\end{gathered}\]
and  by \eqref{pullback} we have 
$\sup_{m\in \supp(a)}|(h_\alpha^* (x\pl_x))^N a(m)|\leq C_N$ for some $C_N>0$ independent of $\alpha,\gamma$. 
We thus have proved the Lemma.
\end{proof}

We need to consider now the finitely many $\alpha\in A_1(\gamma)$ which contributes to the stationary phase (notice
that there are roughly $\mc{O}(\ell(\gamma))$ elements in $A_1(\gamma)$). Notice that, by the arguments above, the compact 
set  $h_\alpha(\supp(a))$ remains in a bounded region $\{C\geq |y|, C>|x|>1/C\}$ of $\hh^{n+1}$, uniformly in 
$\gamma,\alpha\in A_1(\gamma)$. This follows since the distance in the boundary between the fixed points of $\alpha$ 
is uniformly bounded below if the axis of $\alpha$ intersects $\supp(a)$. For the same reason, we obtain 
from the expression \eqref{galpha} that for  $\alpha\in A_1(\gamma)$   
\begin{equation}\label{deriveetalpha}
\max_{m\in h_\alpha (\supp(a))} |\pl_x^\mu\pl_y^\nu h_\alpha^{-1}(m)|\leq C
\end{equation}
for some $C$ depending on $\mu,\nu$ but  independent of $\gamma,\alpha$. 
The phase in the integral  
\[I_{k,\alpha}(t):=\int_{h_\alpha(\mc{F})}e^{-it r_\gamma(x,y)}b_k(\sigma_\alpha(x,y))
a(h^{-1}_\alpha(x,y))\frac{dxdy}{x^{n+1}}, \]
is stationary at $y=0$, and thus integrating on each slice $x={\rm cst}$, we 
have by stationary phase (\cite[Th. 7.7.5]{Hor})  
\begin{equation}\label{Ik}
\begin{gathered}
\left|I_{k,\alpha}(t)-e^{-it \ell(\gamma)}\sum_{j=0}^{N-1} t^{-\ndemi-j}\int_{\{y=0\}\cap h_\alpha(\mc{F})}
A_{2j}(x,y,\pl_y)[a(h_\alpha^{-1}(x,y))b_k(\sigma_\alpha(x,y))]\frac{dx}{x^{n+1}}\right|\\
\leq Ct^{-\ndemi-N}\sum_{|\beta|\leq 2N+2n}||\pl^\beta (b_k\circ\sigma_\alpha.
a\circ h^{-1}_\alpha)||_{L^\infty}.\end{gathered}
\end{equation}
for some smooth differential operator $A_{2j}(x,y,\pl_y)$ of order $2j$. By \cite[Th. 7.7.5]{Hor}, the constant $C$ is shown to be bounded 
uniformly with respect to $\alpha,\gamma$ if 
\[||r_\alpha||_{C^{3N+3n}(h_\alpha(\supp(a)))} \,\, \textrm{ and }\,\, \sup_{(x,y)\in h_\alpha(\supp(a))}\frac{|y|}{|dr_\alpha(x,y)|} \]
are bounded uniformly with respect to 
$\alpha \in A_1(\gamma)$ and with respect to $[\gamma]$. The differential operators $A_{2j}$ have coefficients 
bounded uniformly in $\alpha,\gamma$ if 
\[|\det({\rm Hess}(r_\alpha))|^{-1}\,\, \textrm{ and }  \,\, ||r_\alpha||_{C^{3N+3n}(h_\alpha(\supp(a)))}\]
are bounded uniformly. The uniform boundedness of semi-norms of $r_\alpha$ in a fixed 
compact set of $\hh^{n+1}$ (containing $h_\alpha(\supp(a))$)
follows directly from the expressions \eqref{drgamma} and \eqref{dPhi}, while determinant of Hessian
boundedness is clear from \eqref{hessien} since $\ell(\gamma)$ is bounded below by a positive constant (the radius of injectivity). By the explicit formula  \eqref{dPhi}, and using \eqref{drgamma}, it is direct that 
$|dr_\alpha(x,y)|\geq C|y|$ for some $C$ uniform in $\alpha$ and $(x,y)$ in a fixed compact set of $\hh^{n+1}$  
(using again that $\ell(\gamma)$ is bounded below by a uniform positive constant).  The constant $C$ in \eqref{Ik} is therefore uniform in $\alpha\in A_1(\gamma)$ and the conjugacy class $[\gamma]$.

Now we claim that the amplitudes $||\pl^\beta (b_k\circ\sigma_\alpha.
a\circ h^{-1}_\alpha)||_{L^\infty}\leq C \max_{m\in \supp(a)}e^{-\ndemi d(m,\alpha m)}$ with some 
$C$ uniform in $[\gamma],\alpha\in A_1(\gamma)$. By \eqref{deriveetalpha}, the terms involving $a\circ h_\alpha$
are clearly bounded uniformly. Moreover, since $b_k(\sigma)=\sigma^{\ndemi}c_k(\sigma)$, it 
suffices to bound the derivatives of $\Phi_\alpha=\log(\sigma_\alpha)$. By using that 
$h_\alpha\supp(a)$ is in a uniform compact set of $\hh^{n+1}$ with respect to $\alpha\in A_1(\gamma)$ and 
$[\gamma]$, we have $|\pl^\beta d\Phi|\leq C$ and thus 
\[ \sup_{m\in h_\alpha(\supp(a))}|\pl^\beta b_k(\sigma_\alpha(m))|\leq \sup_{m\in \supp(a)}e^{-\ndemi(d(m,\alpha m))}
.\]
Similarly all terms in the left hand side of \eqref{Ik} is bounded by 
$\sup_{m\in \supp(a)}e^{-\ndemi(d(m,\alpha m))}$. Notice that $\sigma_\alpha(m)=\cosh^{-1}(\ell(\gamma))$ 
is constant on $m\in \{y=0\}$, and thus we have proved 
\begin{lemma}\label{A_1}
With $I_{k,\alpha}$ defined in \eqref{Ik}, we have that for all $N\in \nn$, there exists $C_N>0$ 
such that for all $[\gamma]$, all $\alpha\in A_1(\gamma)$,  and all $t>1$
\[\begin{gathered}
\left|I_{k,\alpha}(t)-\frac{c_k((\cosh \ell(\gamma))^{-1}))}{(\cosh \ell(\gamma))^{\ndemi}}
e^{-it \ell(\gamma)}\sum_{j=0}^{N-1} t^{-\ndemi-j}\int_{{\rm axis}(\gamma)\cap 
\mc{F}}
Q_{2j}a(m) d\mu\right|
\\
\leq Ct^{-\ndemi-N}\sup_{m\in {\rm supp}(a)}e^{-\ndemi(d(m,\alpha m))}.\end{gathered}\]
for some differential operator $Q_{2j}$ with coefficients bounded uniformly in $\alpha,[\gamma]$, and
$d\mu$ is the measure induced by the riemannian measure on ${\rm axis}(\gamma)$.
\end{lemma}

In particular from \eqref{Ik} and the usual expression of $A_0(x,y,\pl_y)$  (see Th 3.14 in \cite{EvZw}) and \eqref{hessien}, we deduce
\[\begin{split}
I_{k,\alpha}(t)=& e^{-i(t+\ndemi) \ell(\gamma)}t^{-\ndemi}\frac{(2\pi)^{\ndemi}
e^{i\frac{n\pi}{4}}(\tanh \ell(\gamma))^{\ndemi} c_k(\cosh(\ell(\gamma))^{-1})}
{|\det({\rm Id}-e^{-\ell(\gamma)}R_\gamma^{-1})|}
\int_{{\rm axis}(\alpha)\cap \mc{F}}a(m)d\mu\\
& +\mc{O}(\sup_{m\in \supp(a)}e^{-\ndemi d(m,\alpha m)}t^{-n/2-1}).
\end{split}\]
where $d\mu$ is the measure induced by the riemannian measure on ${\rm axis}(\alpha)$.
From \eqref{c_0}, one also has 
\[c_0(\cosh(\ell(\gamma))^{-1}))=\sqrt{\pi}e^{-i\pi/4}(\tanh \ell(\gamma))^{-n/2}.\]

Summing Lemma \ref{A_1} and \ref{A_2} over all $\alpha\in [\gamma]$, we obtain 
the integrals over the closed geodesic associated to the conjugacy class $[\gamma]$, and then summing over the set of conjugacy classes $[\gamma]$, we obtain the result claimed in the Theorem. The sums over the group
converges since each estimate involving $\alpha\in[\gamma]$ 
has a bound in terms of  $\sup_{\supp(a)}e^{-\ndemi(d(m,\alpha m))}$, and the series 
\[\sum_{[\gamma]}\sum_{\alpha\in [\gamma]} e^{-\ndemi(d(m,\alpha m))}\]
converges uniformly for $m$ in compact subsets of $\hh^{n+1}$. This ends the proof.
\end{proof}

\end{document}